\definecolor{myurlcolor}{rgb}{0,0,0.9}
\newcommand{\be}{\begin{equation}}
\newcommand{\ee}{\end{equation}}
\newcommand{\beq}{\begin{eqnarray}}
\newcommand{\eeq}{\end{eqnarray}}
\newcommand{\beqs}{\begin{eqnarray*}}
\newcommand{\eeqs}{\end{eqnarray*}}
\newcommand{\inner}[2]{\langle #1 , #2\rangle}
\renewcommand\leq{\leqslant}
\renewcommand\geq{\geqslant}
\newcommand{\norm}[1]{\left\lVert #1 \right\rVert}
\newcommand{\real}{\mathbb{R}}
\theoremstyle{plain}
\newtheorem{thm}{Theorem}
\newtheorem{lem}[thm]{Lemma}
\newtheorem{prop}[thm]{Proposition}
\newtheorem{cor}[thm]{Corollary}
\newtheorem{Def}[thm]{Definition}
\title{Duality of Graph Invariants}
\author{Kaifeng Bu$^{+*}$}
\email{kfbu@fas.harvard.edu}
\author{Weichen Gu$^{\times *}$}
\email{guweichen14@mails.ucas.ac.cn}
\author{{{Arthur Jaffe}}$^{*}$}
\email{arthur\_jaffe@harvard.edu}
\address[*]{Harvard University, Cambridge, MA 02138, USA}
\address[+]{School of Mathematical Science , Zhejiang University,  Hangzhou  310027, China}
\address[$\times$]{Academy of Mathematics and System Science, CAS, Beijing 100080, China}
\begin{document}

\begin{abstract}
We  study a new set of duality relations between weighted, combinatoric invariants of a graph  $G$.
The dualities arise from a non-linear transform $\mathfrak{B}$, acting on the weight function $p$. We define $\mathfrak{B}$ on a  space of real-valued functions $\mathcal{O}$ and investigate  its properties. 
We show that three invariants (weighted independence number, weighted Lov\'{a}sz number, and weighted fractional packing number) are fixed points of  $\mathfrak{B}^{2}$, but the weighted Shannon capacity is not. We interpret these invariants in the study of quantum non-locality.
\end{abstract}

\maketitle
\setcounter{tocdepth}{1}
\tableofcontents

\section{Introduction}
We  study a new set of duality relations between weighted, combinatoric invariants of a graph  $G$, with a fixed set of vertices, but with a variable weight function $p$.  These  dualities arise from a non-linear transform $\mathfrak{B}$, acting on the weight function $p$. 
This is a different duality from the one introduced in~\cite{Acin2017}.
We define $\mathfrak{B}$ on a  space of real-valued functions $\mathcal{O}$ and investigate  its properties. 

In particular,  we give duality relations for the weighted independence number, the weighted Lov\'{a}sz number, and the weighted fractional packing number. We  state these dualities and  prove them  in \S\ref{sect:DualityProof}.  In  \S\ref{sect:Interpretation} we discuss the relations of our results to the study of relative entropy, entanglement, and quantum coherence.  In \S\ref{sec:math_prop} we elaborate the definition  and give detailed properties of the transformation~$\mathfrak{B}$.

\subsection{Background}
Graph invariants have been widely used in communication theory. For example, Shannon defined zero-error channel capacity
by the independence number of graphs, which is used to  quantify the maximal rate of information sent by the channel without error \cite{Shannon56}.
Moreover, the fractional packing number was also proposed to quantify the zero-error channel capacity with feedback\cite{Shannon56}.

The graph approach was also used in the investigation of quantum non-locality,   one of the distinctive features of quantum theory.  This describes the correlations of local measurement
on separate subsystems. It has been proved that non-locality is a fundamental resource in a variety of practical applications, ranging from quantum key distribution \cite{Acin2007} to quantum communication complexity \cite{Buhrman2010}.

Another phenomenon called quantum contextuality had been proposed earlier, and states that outcomes cannot be assigned to measurements independently of the
contexts of the measurements. This phenomenon is known as the Kochen-Specker  paradox ~\cite{Kochen1967}.  One critical observation about contextuality and non-locality is that the non-locality is a special case of contextuality: the compatibility of the measurement outcomes are given by the measurement of observables on separable subsystems, and contextuality holds even for single systems.
Besides, contextuality has been proved to be useful in quantum computation \cite{Anders2009,Raussendorf2013,Howard2014,Delfosse2015,Bermejo2017},
which has attracted lots of attention to this topic in recent years.
Several  approaches
 have been proposed in the investigation of   the phenomenon of contextuality \cite{Abramsky2011,Cabello2014}. A  very interesting combinatoric method uses hypergraphs to describe contextual scenarios~\cite{Acin15};  the authors apply graph invariants to the investigation of the classification of  probabilistic models.

\subsection{Main Results}
We need a few basic concepts in order to state our main conclusions.  
\begin{Def}\label{space o}
Let $\real^n_+$  denote vectors in $\real^n$, all of whose coordinates  are nonnegative.
Given a function $f: \real^n_+\to \real_+$, 
\begin{itemize}
\item[{\rm(1)}] $f$ is  positive affine, if   $f(\lambda p)=\lambda f(p)$,  $\forall \lambda >0$, $\forall p\in \real^n_+$.
\item[{\rm(2)}] $f$ is nondegenerate, if  $\forall p\in \real^n_+$ with $p \neq 0$, then $f(p)>0$.
\item[{\rm(3)}] $f$ is  bounded, if there exist constants $c_1, c_2>0$ such that
\begin{eqnarray}
c_1\norm{p} \leq f(p)\leq c_2\norm{p},~\forall p \in \real^n_+\;.
\end{eqnarray}
\item[{\rm(4)}] $f$ is  continuous if it is pointwise continuous.
\item[{\rm(5)}] $f$ is  convex, if  for any $p_1, p_2\in\real^n_+$, $ \mu\in [0, 1]$,
\begin{eqnarray}
f(\mu p_1+(1-\mu)p_2)\leq \mu f(p_1)+(1-\mu)f(p_2)\;.
\end{eqnarray}
 \item[{\rm(6)}]  $f$ is monotonically increasing, if $f(p)\leq f(p+q)$ for any $p, q\in \real^n_+$.
\end{itemize}
\end{Def}

\begin{Def} \label{DefSpace-Trans}
Denote by $\mathcal{O}$ the set of functions
$f: \real^n_+ \to \real_+$ being: {\rm(1)}~positive affine, {\rm(2)}~non-degenerate,  {\rm(3)}~bounded, and  {\rm(4)}~continuous.  Denote by $\mathfrak{B}$  the transformation with domain $\mathcal{O}$,
 \begin{eqnarray}
(\mathfrak{B}f)(p)=\sup_{q\neq 0}\frac{\inner{p}{q}}{f(q)}\;.
\end{eqnarray}
\end{Def}

\begin{thm}[\bf  Involution] \label{thm:final}
The transform $\mathfrak{B}$  maps $\mathcal{O}$ to itself.  
 Given $f\in\mathcal{O}$, one has $\mathfrak{B}^{2}(f)=f$,
 iff $f$
 is monotonically increasing and convex.
\end{thm}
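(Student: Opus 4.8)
The guiding idea is that $\mathfrak{B}$ is, up to a change of coordinates between gauges and their sublevel sets, the \emph{antiblocker} (Fulkerson) duality on convex corners in $\real^n_+$; once this dictionary is set up, Theorem~\ref{thm:final} becomes a bipolar-type statement. Throughout, fix $f\in\mathcal{O}$ with bounds $c_1\norm{q}\leq f(q)\leq c_2\norm{q}$, and let $C_f=\{x\in\real^n_+:f(x)\leq 1\}$.

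\emph{Step 1: $\mathfrak{B}$ maps $\mathcal{O}$ into $\mathcal{O}$.} The substitution $x=q/f(q)$ (so $f(x)=1$) rewrites $(\mathfrak{B}f)(p)=\sup\{\inner{p}{x}:x\in\real^n_+,\ f(x)=1\}$, exhibiting $\mathfrak{B}f$ as the support function, restricted to $\real^n_+$, of the set $S_f=\{q/f(q):q\neq 0\}$, which lies in the ball of radius $1/c_1$ by the lower bound on $f$. A support function of a bounded set is finite, positively affine, convex, and Lipschitz, hence continuous. Nondegeneracy and the two-sided bound $\tfrac1{c_2}\norm{p}\leq(\mathfrak{B}f)(p)\leq\tfrac1{c_1}\norm{p}$ follow by taking $q=p$ for the lower estimate and Cauchy--Schwarz with $f(q)\geq c_1\norm{q}$ for the upper one. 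Thus $\mathfrak{B}f\in\mathcal{O}$.

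\emph{Step 2: the ``only if'' direction and the universal inequality.} If $\mathfrak{B}^2(f)=f$, then $f=\mathfrak{B}(\mathfrak{B}f)$ lies in the range of $\mathfrak{B}$. Every function of the form $\mathfrak{B}g$ is a supremum of the linear maps $p\mapsto\inner{p}{q}/g(q)$, hence convex; and it is monotonically increasing since $\inner{p+q'}{q}\geq\inner{p}{q}$ for all $q,q'\in\real^n_+$. This gives the stated necessity. Separately, record that $\mathfrak{B}^2(f)\leq f$ always: for any $p,q$ one has $(\mathfrak{B}f)(q)\geq\inner{q}{p}/f(p)$ by taking $r=p$ in the inner supremum, so $\inner{p}{q}/(\mathfrak{B}f)(q)\leq f(p)$ when $\inner{p}{q}>0$ (and the ratio is $0\leq f(p)$ otherwise); taking the supremum over $q$ yields $(\mathfrak{B}^2 f)(p)\leq f(p)$. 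This half uses only nondegeneracy.

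\emph{Step 3: the ``if'' direction.} Assume $f$ is monotonically increasing and convex. Then $C_f$ is a \emph{convex corner}: it is closed and bounded (continuity and $f\geq c_1\norm{\cdot}$), convex (sublevel set of a convex function), contains a neighborhood of $0$ in $\real^n_+$ (from $f\leq c_2\norm{\cdot}$), and is downward closed, i.e. $0\leq y\leq x\in C_f\Rightarrow f(y)\leq f(x)\leq 1$ by monotonicity; moreover $f$ equals the gauge $\gamma_{C_f}$ of $C_f$ by positive affineness. By Step~1, $\mathfrak{B}f$ equals the support function $\sigma_{C_f}$ on $\real^n_+$; a one-line computation of infima shows $\sigma_{C_f}=\gamma_{C_f^*}$, where $C_f^*=\{y\in\real^n_+:\inner{x}{y}\leq 1\ \forall x\in C_f\}$ is the antiblocker. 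Hence $\{y:(\mathfrak{B}f)(y)\leq 1\}=C_f^*$, and applying Step~1 once more gives $\mathfrak{B}^2(f)=\sigma_{C_f^*}=\gamma_{(C_f^*)^*}$ on $\real^n_+$. It therefore suffices to prove the bipolar identity $(C_f^*)^*=C_f$, since then $\mathfrak{B}^2(f)=\gamma_{C_f}=f$. The inclusion $C_f\subseteq(C_f^*)^*$ is immediate. For the converse, take $z\in\real^n_+\setminus C_f$ and separate $z$ from the closed convex set $C_f$ by an affine functional $\inner{a}{\cdot}\leq b<\inner{a}{z}$ on $C_f$; since $0\in C_f$ and $C_f$ contains strictly positive vectors, $b>0$. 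The decisive step, and the only place monotonicity (equivalently, downward-closedness of $C_f$) is used, is that $a$ may be replaced by its coordinatewise positive part $a^+$: for $x\in C_f$, zeroing out the coordinates on which $a$ is negative gives $x''\leq x$, so $x''\in C_f$ and $\inner{a^+}{x}=\inner{a}{x''}\leq b$, while $\inner{a^+}{z}\geq\inner{a}{z}>b$ because $z\geq 0$. Then $y:=a^+/b\in C_f^*$ and $\inner{y}{z}>1$, so $z\notin(C_f^*)^*$. This proves $(C_f^*)^*=C_f$ and completes the argument.

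The main obstacle is precisely that positive-part replacement in the separation step: without monotonicity the separating normal need not be nonnegative, the antiblocker bipolar theorem fails, and $\mathfrak{B}^2(f)=f$ breaks down; everything else is bookkeeping with positive homogeneity and the defining bounds of $\mathcal{O}$.
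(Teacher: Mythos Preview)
Your proof is correct and takes a genuinely different (and more economical) route than the paper's. The paper introduces an intermediate notion of ``positive convex'' sets and proceeds via a chain of results: first a characterization of $\Delta=\{f:\mathfrak{B}^2 f=f\}$ through supporting hyperplanes with nonnegative normal (Theorem~\ref{12}), then the equivalence $f\in\Delta\Leftrightarrow C_f(\lambda)$ is positive convex for all $\lambda$ (Theorem~\ref{1.1}), and finally the equivalence of positive convexity of all sublevel sets with monotonicity plus convexity of $f$ (Proposition~\ref{lem:mon}). You instead recognize $\mathfrak{B}$ as the gauge/support-function passage associated with the antiblocker, so that $\mathfrak{B}^2 f=f$ becomes the antiblocker bipolar identity $(C_f^*)^*=C_f$, proved by a single separation argument with the positive-part replacement. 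Your ``only if'' direction is notably cleaner: you simply observe that every $\mathfrak{B}g$ is a pointwise supremum of nonnegative linear forms on $\real^n_+$ and is therefore automatically convex and monotone, whereas the paper recovers these properties only after the full structural characterization. Both approaches ultimately hinge on the same essential step---showing that a hyperplane separating a point from $C_f$ can be taken with nonnegative normal, using downward-closedness---but yours packages it more tightly and makes the connection to Fulkerson's antiblocker duality explicit. One small point: your justification of $b>0$ is a bit terse; the cleanest argument is that $C_f$ contains $\epsilon e_i$ for every small $\epsilon>0$ and each coordinate $i$, so $b=0$ would force $a_i\leq 0$ for all $i$ and hence $\inner{a}{z}\leq 0<\inner{a}{z}$, a contradiction.
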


Consider a graph $G=(V, E)$  with vertices $V=V(G)$ and edges $E=E(G)$.  Let  $\overline{G}$ denote the complement of $G$.  The vertices  of $G$ and $\overline{G}$ are the same, but the edges are complimentary.  Two vertices in $G$  define an  edge of $\overline{G}$,  iff they do not define an edge in $G$.

A weight on $G$ is a function $p:V(G)\rightarrow [0, +\infty)$.
 In \S\ref{sec:pre} we discuss weighted  invariants for a graph $G$, that depend  on a weight (function) $p$. We define  the weighted independence number $\alpha_{G}(p)$, the weighted Lov\'{a}sz number  $\vartheta_{G}(p)$,  the weighted fractional packing number $\alpha_{G}^{*}(p)$, and the weighted Shannon capacity $\Theta_{G}(p)$.   

\begin{thm}[\bf Duality]\label{thm:g_dual}
Given a graph $G$ and a weight function $p$,
\begin{equation}
\label{eq:dua}\alpha_{G}=\mathfrak{B}\alpha_{\overline{G}}^*\;,\quad
\alpha^*_{G}=\mathfrak{B}\alpha_{\overline{G}}\;,\quad
\vartheta_{G}=\mathfrak{B}\vartheta_{\overline{G}}\;.
\end{equation}
\end{thm}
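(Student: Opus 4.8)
The plan is to obtain the three duality relations \eqref{eq:dua} from a single mechanism: each of the weighted invariants is the support function of a convex body in $\real^{n}_{+}$, replacing $G$ by its complement $\overline{G}$ corresponds — at the level of these bodies — to passing to the \emph{antiblocker}, and $\mathfrak{B}$ realizes exactly the antiblocker operation on support functions. So the first step is to record the representations (this is the business of~\S\ref{sec:pre}) that, for every weight $p$,
\[
\alpha_{G}(p)=\max_{x\in\operatorname{STAB}(G)}\inner{p}{x},\quad \alpha_{G}^{*}(p)=\max_{x\in\operatorname{QSTAB}(G)}\inner{p}{x},\quad \vartheta_{G}(p)=\max_{x\in\operatorname{TH}(G)}\inner{p}{x},
\]
where $\operatorname{STAB}(G)$ is the stable set polytope (convex hull of the indicators of independent sets of $G$), $\operatorname{QSTAB}(G)=\{x\in\real^{n}_{+}:\sum_{v\in Q}x_{v}\le 1\text{ for every clique }Q\text{ of }G\}$, and $\operatorname{TH}(G)$ is the Lov\'{a}sz theta body. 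The first representation is immediate since a linear functional on a polytope is maximized at a vertex; the second is the defining linear program of the weighted fractional packing number; the third is the standard semidefinite characterization of $\vartheta_{G}$, whose primal and dual forms — a maximum over orthonormal representations of $\overline{G}$ and a minimum over orthonormal representations of $G$ — are matched by Slater's condition. Each of the three bodies is a convex corner (compact, convex, down-closed, containing the origin, full-dimensional), which in passing re-proves that $\alpha_{G},\alpha_{G}^{*},\vartheta_{G}\in\mathcal{O}$ and are monotonically increasing and convex, so Theorem~\ref{thm:final} applies to each.

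Second, I would compute the action of $\mathfrak{B}$ on a support function. If $f(q)=\max_{x\in K}\inner{q}{x}$ for a convex corner $K$, then for $p\in\real^{n}_{+}$, normalising an arbitrary $q\neq 0$ by positive affineness so that $f(q)=1$, and using $K\subseteq\real^{n}_{+}$ to scale $q$ up to the boundary of $\{r\ge 0:f(r)\le 1\}$ without decreasing $\inner{p}{q}$,
\[
(\mathfrak{B}f)(p)=\sup_{q\neq 0}\frac{\inner{p}{q}}{f(q)}=\sup\bigl\{\inner{p}{q}:q\in\real^{n}_{+},\ \inner{q}{x}\le 1\ \text{for all }x\in K\bigr\}=\max_{q\in\operatorname{abl}(K)}\inner{p}{q},
\]
where $\operatorname{abl}(K)=\{q\in\real^{n}_{+}:\inner{q}{x}\le 1\ \text{for all }x\in K\}$ is compact because $f$ is bounded below by $c_{1}\norm{\cdot}$. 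Thus $\mathfrak{B}$ sends the support function of $K$ to the support function of $\operatorname{abl}(K)$.

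Third is the combinatorial input: the antiblocker identities $\operatorname{abl}\bigl(\operatorname{STAB}(G)\bigr)=\operatorname{QSTAB}(\overline{G})$ and $\operatorname{abl}\bigl(\operatorname{TH}(G)\bigr)=\operatorname{TH}(\overline{G})$. The first is elementary: for $q\ge 0$ the condition $\inner{q}{\mathbb{1}_{S}}\le 1$ for every independent set $S$ of $G$ is literally $\sum_{v\in Q}q_{v}\le 1$ for every clique $Q$ of $\overline{G}$. The second — self-antiblocking of the theta body — is the substantive ingredient; it may be quoted from Gr\"otschel--Lov\'asz--Schrijver, or proved self-containedly from the two variational forms of $\vartheta$: from near-optimal orthonormal representations $\{u_{v}\}$ of $\overline{G}$ with handle $c$ for $\vartheta_{G}(p)$ and $\{w_{v}\}$ of $G$ with handle $c'$ for $\vartheta_{\overline{G}}(q)$, the vectors $u_{v}\otimes w_{v}$ are pairwise orthonormal, forcing $\inner{p}{q}\le\vartheta_{G}(p)\,\vartheta_{\overline{G}}(q)$ for every $q$; conversely, for an optimal representation of $\vartheta_{G}(p)$ the choice $q_{v}=\inner{c}{u_{v}}^{2}$ gives $\inner{p}{q}=\vartheta_{G}(p)$ and $\vartheta_{\overline{G}}(q)\le 1$, hence equality. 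Combining the three steps: from $\operatorname{abl}(\operatorname{STAB}(\overline{G}))=\operatorname{QSTAB}(G)$ one gets $\mathfrak{B}\alpha_{\overline{G}}=\alpha_{G}^{*}$; applying $\mathfrak{B}$ again and using Theorem~\ref{thm:final} (which applies since $\alpha_{\overline{G}}^{*}$ is monotone and convex) gives $\mathfrak{B}\alpha_{\overline{G}}^{*}=\alpha_{G}$; and the theta identity gives $\mathfrak{B}\vartheta_{\overline{G}}=\vartheta_{G}$ directly. For the two $\alpha$ statements one may alternatively skip the antiblocker language and verify $\mathfrak{B}\alpha_{\overline{G}}=\alpha_{G}^{*}$ by hand, since both sides are explicit linear programs.

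The step I expect to be the main obstacle is the theta case, namely the reverse inequality $\mathfrak{B}\vartheta_{\overline{G}}\ge\vartheta_{G}$, equivalently $\operatorname{abl}(\operatorname{TH}(G))\supseteq\operatorname{TH}(\overline{G})$: it requires the tensor-product construction of orthonormal representations of $\overline{G}$ and a proof that the candidate weight $q_{v}=\inner{c}{u_{v}}^{2}$ is genuinely optimal, including the degenerate situation where $\inner{c}{u_{v}}=0$ for some vertices (handled by restricting to $\{v:p_{v}>0\}$ or by a small perturbation of the handle). A secondary, more routine point — deferred to~\S\ref{sec:pre} — is establishing the support-function and semidefinite-duality representations of the weighted invariants in the first place.
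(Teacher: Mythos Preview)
Your proposal is correct, and the underlying ingredients coincide with the paper's: the $\alpha/\alpha^{*}$ duality rests on the clique/independent-set correspondence between $G$ and $\overline{G}$, and the $\vartheta$ duality on the two (max and min) variational forms of the weighted Lov\'asz number. Where you differ is in packaging. You isolate a single lemma --- $\mathfrak{B}$ sends the support function of a convex corner $K$ to that of its antiblocker $\operatorname{abl}(K)$ --- and then import the classical identities $\operatorname{abl}(\operatorname{STAB}(G))=\operatorname{QSTAB}(\overline{G})$ and $\operatorname{abl}(\operatorname{TH}(G))=\operatorname{TH}(\overline{G})$. The paper instead verifies each relation directly: for $\alpha^{*}_{\overline{G}}=\mathfrak{B}\alpha_G$ it normalises $q=w/\alpha_G(w)$ and checks the clique constraint, and for $\vartheta$ it inserts an optimal representation for the \emph{min} form of $\vartheta_G(p)$ into the max form of $\vartheta_{\overline{G}}(w)$ (both taken over orthonormal representations of $G$, so no tensor product is needed). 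Your route situates Theorem~\ref{thm:g_dual} as an instance of antiblocking and unifies the three cases under one mechanism; the paper's is shorter and self-contained. Two small remarks: in your tensor sketch for $\inner{p}{q}\le\vartheta_G(p)\,\vartheta_{\overline{G}}(q)$ you should take the representations from the \emph{min} forms (representations of $G$ for $\vartheta_G$, of $\overline{G}$ for $\vartheta_{\overline{G}}$), not the max forms as written, so that $p_v\le\vartheta_G(p)\,\lvert\inner{c}{u_v}\rvert^{2}$ holds pointwise and the orthonormality of $\{u_v\otimes w_v\}$ finishes the bound; and deriving $\alpha_G=\mathfrak{B}\alpha^{*}_{\overline{G}}$ from Theorem~\ref{thm:final} is logically fine (that theorem is proved independently in \S\ref{sec:math_prop}) but inverts the paper's order --- the paper simply repeats the direct argument with the roles of $\alpha$ and $\alpha^{*}$ swapped.
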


Since the weighted independence number, weighted
  Lov\'{a}sz number and weighted fractional packing number, satisfy the conditions of Theorem~\ref{thm:final}, they equal their double transform. On the other hand the weighted Shannon capacity $\Theta_{G}$ is not convex \cite{Acin15}. So we infer:

\begin{cor}\label{cor:dual}
Given a graph $G$ and a weight function $p$,
\begin{eqnarray}
\label{eq:ddua}&&\alpha_{G}=\mathfrak{B}^{2}\alpha_{{G}}\;,\quad
\alpha^{*}_{G}=\mathfrak{B}^{2}\alpha_{G}^{*}\;,\quad
\vartheta_{G}=\mathfrak{B}^{2}\vartheta_{G}\;. 
\end{eqnarray}
In  case that  $\Theta_{G}$ is not convex, $\Theta_{G} \neq \mathfrak{B}^{2} \Theta_{G}$.
\end{cor}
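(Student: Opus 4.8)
The plan is to obtain both parts of Corollary~\ref{cor:dual} directly from Theorem~\ref{thm:final} (Involution) and Theorem~\ref{thm:g_dual} (Duality), the only extra input being that the four weighted invariants belong to $\mathcal{O}$, which is established in \S\ref{sec:pre}. For the three ``nice'' invariants, I would argue as follows. By Theorem~\ref{thm:g_dual}, each of $\alpha_G$, $\alpha^*_G$, $\vartheta_G$ has the form $\mathfrak{B}g$ for a suitable $g\in\mathcal{O}$, namely $g=\alpha^*_{\overline{G}}$, $\alpha_{\overline{G}}$, $\vartheta_{\overline{G}}$ respectively (here one uses that these invariants of $\overline{G}$ lie in $\mathcal{O}$). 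Now for \emph{any} $g\in\mathcal{O}$ the function $(\mathfrak{B}g)(p)=\sup_{q\neq 0}\inner{p}{q}/g(q)$ is a pointwise supremum of the linear maps $p\mapsto \inner{p}{q}/g(q)$, each with nonnegative coefficients since $q\in\real^n_+$, and the supremum is finite because the boundedness condition gives $\inner{p}{q}/g(q)\leq \norm{p}/c_1$. Hence $\mathfrak{B}g$ is automatically monotonically increasing and convex. Therefore $\alpha_G$, $\alpha^*_G$, $\vartheta_G$ satisfy the hypotheses of Theorem~\ref{thm:final}; since they also lie in $\mathcal{O}$ ($\mathfrak{B}$ maps $\mathcal{O}$ to itself by Theorem~\ref{thm:final}), the equivalence there yields $\mathfrak{B}^2\alpha_G=\alpha_G$, $\mathfrak{B}^2\alpha^*_G=\alpha^*_G$, $\mathfrak{B}^2\vartheta_G=\vartheta_G$. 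A second, even shorter route avoids Theorem~\ref{thm:final} altogether: applying Theorem~\ref{thm:g_dual} to $\overline{G}$ and using $\overline{\overline{G}}=G$ gives $\mathfrak{B}\alpha_G=\alpha^*_{\overline{G}}$, and applying it again to $G$ gives $\mathfrak{B}^2\alpha_G=\mathfrak{B}\alpha^*_{\overline{G}}=\alpha_G$, with the analogous chains for $\alpha^*_G$ and $\vartheta_G$.

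For the Shannon capacity, I would first invoke $\Theta_G\in\mathcal{O}$ (\S\ref{sec:pre}), so that Theorem~\ref{thm:final} applies to $f=\Theta_G$ and states that $\mathfrak{B}^2\Theta_G=\Theta_G$ holds if and only if $\Theta_G$ is monotonically increasing \emph{and} convex. Taking the contrapositive of the ``only if'' implication: if $\Theta_G$ fails to be convex then $\mathfrak{B}^2\Theta_G\neq\Theta_G$. The existence of graphs for which $\Theta_G$ is not convex is exactly the fact cited from \cite{Acin15}, which makes this statement non-vacuous; the proviso ``in case $\Theta_G$ is not convex'' is genuinely needed, since on graphs where $\alpha_G$, $\vartheta_G$, $\alpha^*_G$, $\Theta_G$ all coincide (e.g.\ perfect graphs) the capacity is convex and then is fixed by $\mathfrak{B}^2$.

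The remaining verifications are routine: monotonicity and convexity of $\alpha_G$, $\vartheta_G$, $\alpha^*_G$ follow immediately from their variational descriptions in $p$ (a maximum of linear functionals $p\mapsto\sum_{v\in S}p(v)$ over independent sets for $\alpha_G$, and semidefinite/linear programs whose value is convex in the right-hand side $p$ for $\vartheta_G$ and $\alpha^*_G$). The one step that needs real care --- and the main obstacle --- is confirming that all four invariants, and especially $\Theta_G$, actually lie in $\mathcal{O}$; positive affineness, non-degeneracy and monotonicity of $\Theta_G$ are easy, but boundedness and pointwise continuity of the weighted Shannon capacity in $p$ are the delicate points. Once membership in $\mathcal{O}$ is secured, Corollary~\ref{cor:dual} is an immediate consequence of Theorems~\ref{thm:final} and~\ref{thm:g_dual}.
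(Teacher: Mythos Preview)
Your proposal is correct and matches the paper's own reasoning. The paper derives the three equalities in \eqref{eq:ddua} exactly via your ``second, even shorter route'' (apply Theorem~\ref{thm:g_dual} twice, using $\overline{\overline{G}}=G$), and handles $\Theta_G$ by the contrapositive of Theorem~\ref{thm:final} together with the non-convexity result from \cite{Acin15}; your first route is also implicit in the paper, since Proposition~\ref{prop:ddual} gives $\mathfrak{B}^3f=\mathfrak{B}f$, i.e.\ every $\mathfrak{B}g$ already lies in $\Delta$.
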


We also consider the application of the duality relation of graph invariants in contextuality. In Theorem~\ref{thm:space} we find that some graph invariants used in the area of contextuality can be interpreted as the max-relative entropy of the probabilistic models.  The max-relative entropy has proved to play an important role in quantum communication~\cite{KRS09}, quantum entanglement manipulation~\cite{Dat09a,Dat09b,BD11}, and quantum coherence manipulation~\cite{Bu17}.  So it may be possible to use graph invariance to provide additional understanding of the role of contextuality in quantum information tasks.
In addition in Theorem \ref{thm:phy} we show that the graph invariant $\alpha^*$ gives a lower bound on the maximum violation of all positive
Bell-type inequalities of the probabilistic models.

\section{Preliminaries}\label{sec:pre}
In this section, we introduce some graph invariants that we study in this paper.  
Consider an undirected and loopless graph $G=(V, E)$, the $V$ the set of vertices and $E$  the set of edges~\cite{Berge1973}. If the vertices $v_1, ~v_2$ are in the same edge, we say that they are adjacent, denoted $v_1 \sim v_2$; otherwise they are non-adjacent $v_1 \nsim v_2$.

\begin{Def} Given $G=(V,E)$,  $I\subset V$ is independent, if for all distinct $v_1,~v_2\in I$, $v_1\nsim v_2$.  Let $\mathcal{I}(G)$ denote the set of independent subsets of  $V$.
The independence number $\alpha_{G}$ of a graph $G$, is 
\[
\alpha_{G}:=\max_{I\in\mathcal{I}(G)} \vert I\vert\;.
\]
Given a weight $p$ on $G$, the weighted independence  number $\alpha_G( p)$ of $G$, is
\begin{equation*}
 \alpha_G(p):=\max_{I\in\mathcal{I}(G)}\sum_{v\in I}p(v)\;.
 \end{equation*}
\end{Def}

Let $G_1$ and $G_2$ be two graphs. Their strong product~\cite{Imrich2000},  denoted $G_1\boxtimes G_2$, is the graph with vertices
$V(G_1\boxtimes G_2)=V(G_1)\times V(G_2)$.  The edges satisfy
$(u_1, v_1)\sim (u_2, v_2)$ if and only if
\[
(u_1\sim u_2\wedge v_1\sim v_2)\vee (u_1\sim u_2\wedge v_1=v_2) \vee (u_1=u_2\wedge v_1\sim v_2)\;.
\]

\begin{Def}[\cite{Shannon56}]
The  Shannon capacity of $G$ is
\[\Theta_{G}=\lim_{n\rightarrow \infty}\sqrt[n]{\alpha_{G^{\boxtimes n}}}\;.\]
Given a weight on $G$, the weighted Shannon capacity is
\[\Theta_G( p)=\lim\limits_{n\rightarrow \infty}\sqrt[n]{\alpha_{G^{\boxtimes n}}(p^{\otimes n})}\;.\]
\end{Def}

To define the Lov\'{a}sz number of a graph, we first need an additional property.  If $|V(G)|=n$, an orthonormal representation of $G$ \cite{Lovasz1979} is an assignment
\[
\psi:V(G)\rightarrow\mathbb{R}^n\quad
\text{with}\quad
v\mapsto \psi_v\;,
\]
such that: ~
(1) each $\psi_v$ is an unit vector;~~
(2) $u \nsim v $ implies $\psi_u ~\bot~ \psi_v$.

\begin{Def}[\cite{Lovasz1979}]
The Lov\'{a}sz number, $\vartheta_{G}$, is defined as:
\begin{eqnarray*}
\vartheta_{G}=\max_{c, \psi}
\sum_{v\in V(G)}
|\langle c , \psi_v \rangle|^2,
\end{eqnarray*}
where $c \in\mathbb{R}^n$ ranges over all unit vectors and $\psi$ over all orthonormal representations of $\overline{G}$. The weighted Lov\'{a}sz number  is
\begin{eqnarray*}
\vartheta_G(p)=\max_{c, \psi}
\sum_{v\in V(G)}
p(v)|\langle c , \psi_v \rangle|^2,
\end{eqnarray*}
where $c \in\mathbb{R}^n$ ranges over all unit vectors and $\psi$ over all orthonormal representations of  $\overline{G}$.
\end{Def}

\begin{Def}
A subset $C$ of $V(G)$ is called a clique, if for any $v_1,~v_2\in C$, $v_1\sim v_2$. The fractional packing number $\alpha^*_{G}$ of $G$ is:
\begin{eqnarray*}
\alpha^*_{G}=\max_q
\sum_{v\in V(G)} q(v),
\end{eqnarray*}
~~~~where $q : V(G) \rightarrow [0, 1]$ ranges over all the vertex weightings satisfying
$\sum\limits_{v\in C}q(v)\leq 1$ for all cliques $C$ of $G$. The weighted fractional packing number $\alpha^*_G( p)$ is:
\begin{eqnarray*}
\alpha^*_{G}(p)=\max_q
\sum_v p(v)q(v),
\end{eqnarray*}
where $q$ is taken as above.
\end{Def}

\section{Proof of the Duality Theorem \ref{thm:g_dual}}\label{sect:DualityProof}
%
First we prove that  $\alpha^*_{G}=\mathfrak{B}\alpha_{\overline{G}}$, or equivalently $\alpha^*_{\overline{G}}=\mathfrak{B}\alpha_{{G}}$.  We omit the identical proof that  $\alpha_{G}=\mathfrak{B}\alpha_{\overline{G}}^*$. 
Assume that we have a nonzero weight $w$ on $G$. It is easy to see that  the vertices of a clique in $\overline{G}$ also form an independent set
of $G$.  According to the definition of
$\alpha_G(w)$, the weight function $q(v)={w(v)}/{\alpha_G( w)}$
 satisfies the condition that
$\sum\limits_{v\in C}q(v)\leq 1$ for any clique $C$ in $\overline{G}$.
From the definition of $\alpha^*_{\overline{G}}(p)$, we infer that for any $w\neq0$, 
\[
\alpha^*_{\overline{G}}(p)\geq \sum\limits_v p(v)q(v)=\frac{\inner{p}{w}}{\alpha_G(w)}\;.
\]
Since the set
\begin{eqnarray*}
\set{q:V(G)\rightarrow \real_+ \left | \ \sum_{v\in C}q_v\leq 1\;,\text{ for any clique C in }\overline{G} \right.}
\end{eqnarray*}
is compact, then there exists some weight function $q_0$ in it such that  
$\alpha^*_{\overline{G}}(p)=\sum_v p(v)q_0(v)$.
It is easy to see that $\alpha_G(q_0)\leq 1$, thus
$\alpha^*_{\overline{G}}(p)\leq {\inner{p}{q_0}}/{\alpha_G(q_0)}$.
Hence, we have proved the equation $\alpha^*_{G}=\mathfrak{B}\alpha_{\overline{G}}$.

To prove $\vartheta_{G}=\mathfrak{B}\vartheta_{\overline{G}}$,
we only need to prove
$
\vartheta_{\overline{G}}(w)=\max_{p\neq 0}{\inner{p}{w}}/{\vartheta_G( p)}
$. 
From the definition of the weighted Lov\'asz number, we know that
\begin{eqnarray*}
\vartheta_{\overline{G}}(w)=
\max_{\ket{\Psi}, \psi}\sum_{u\in V(G)}w(u)|\langle \Psi | \psi_u \rangle|^2,
\end{eqnarray*}
where $\ket{\Psi}\in\mathbb{R}^{|V(G)|}$ ranges over all unit vectors and
$\psi$ over all orthonormal
representations of G.
It has been proved in \cite{Acin15} that the weighted Lov\'{a}sz number can also be written as
\begin{eqnarray*}
\vartheta_G(p)=\min_{\ket{\Psi}, \psi}
\max_{v\in V}
\frac{p_v}{|\langle \Psi | \psi_v \rangle|^2},
\end{eqnarray*}
where $\ket{\Psi}\in\mathbb{R}^{|V(G)|}$ ranges over all unit vectors and
$\psi$ over all orthonormal
representations of G.
Therefore, there exists some unit vector and an orthogonal representation $(\ket{\Psi}, \psi)$
such that
\begin{eqnarray*}
\vartheta_G(p)=
\max_{v\in V}\frac{p_v}{|\langle \Psi | \psi_v \rangle|^2}\;,
\end{eqnarray*}
so
\begin{eqnarray*}
\vartheta_G(p)\vartheta_{\overline{G}}(w)
&\geq&\max_{v \in V}\frac{p_v}{|\langle \Psi | \psi_v \rangle|^2}
\sum_{u\in V}w(u)|\langle \Psi | \psi_u \rangle|^2\\
&\geq& \sum_{u \in V}p(u)w(u)\;,
\end{eqnarray*}
that is
\begin{eqnarray}
\vartheta_{\overline{G}}(w)\geq\max_{p\neq 0}\frac{\inner{p}{w}}{\vartheta_G( p)}\;.
\end{eqnarray}
Besides, there exists some
unit vector and a orthogonal representation  $(\ket{\Psi}, \psi)$
such that
\begin{eqnarray*}
\vartheta_{\overline{G}}(w)=\sum_{u\in V}w(u)|\langle \Psi | \psi_u \rangle|^2.
\end{eqnarray*}
Now let us take the weight function
p such that
$\frac{p_u}{|\langle \Psi | \psi_u \rangle|^2}=\frac{p_v}{|\langle \Psi | \psi_v \rangle|^2}$
for any $u,v$, then
\begin{eqnarray*}
 \sum_{u \in V}p(u)w(u)
&=&\max_{v \in V}\frac{p_v}{|\langle \Psi | \psi_v \rangle|^2}
\sum_{u\in V}w(u)|\langle \Psi | \psi_u \rangle|^2\\
&\geq& \vartheta_G(p)\vartheta_{\overline{G}}(w)\;,
\end{eqnarray*}
Therefore, we get the desired result and the theorem is proved.
%

\section{Mathematical Properties of the Transformation $\mathfrak{B}$}\label{sec:math_prop}
All the statements in Corollary~\ref{cor:dual} follow from Theorem \ref{thm:g_dual}, except that we need to show that the Shannon capacity may not be invariant under $\mathfrak{B}^2$. To exhibit this, we establish necessary and sufficient conditions for functions to be $\mathfrak{B}^2$ invariant. That is the focus of this section, in which we investigate the mathematical properties of $\mathfrak{B}$ in detail.  
Recall the space $\mathcal{O}$ given in Definitions \ref{space o} and \ref{DefSpace-Trans}.



\begin{prop}\label{prop:space}
If $f\in \mathcal{O}$ , then $\mathfrak{B}f\in \mathcal{O}$.
\end{prop}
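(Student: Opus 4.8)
The plan is to verify, one at a time, the four defining properties of $\mathcal{O}$ for the function $\mathfrak{B}f$, the only real input being the two-sided bound $c_1\norm{q}\leq f(q)\leq c_2\norm{q}$ supplied by property~(3) of $f$; notably, continuity of $f$ is not needed for this proposition. First I would check that $\mathfrak{B}f$ is well defined as a map $\real^n_+\to\real_+$. For $p,q\in\real^n_+$ one has $\inner{p}{q}\geq 0$, and by Cauchy--Schwarz together with $f(q)\geq c_1\norm{q}$,
\[
\frac{\inner{p}{q}}{f(q)}\leq\frac{\norm{p}\,\norm{q}}{c_1\norm{q}}=\frac{\norm{p}}{c_1}\;,
\]
so the supremum over $q\neq 0$ is a finite, nonnegative real number. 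Taking the supremum already yields the upper half of property~(3), namely $(\mathfrak{B}f)(p)\leq c_1^{-1}\norm{p}$.

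For the lower bound in property~(3), and hence also for non-degeneracy~(2), I would simply test the supremum against the single choice $q=p$ when $p\neq 0$: using $f(p)\leq c_2\norm{p}$,
\[
(\mathfrak{B}f)(p)\geq\frac{\inner{p}{p}}{f(p)}=\frac{\norm{p}^2}{f(p)}\geq\frac{\norm{p}}{c_2}\;,
\]
and this inequality holds trivially at $p=0$ as well. Thus $c_2^{-1}\norm{p}\leq(\mathfrak{B}f)(p)\leq c_1^{-1}\norm{p}$ for all $p\in\real^n_+$, so $\mathfrak{B}f$ is bounded, and in particular $(\mathfrak{B}f)(p)>0$ whenever $p\neq 0$. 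Positive affinity~(1) is immediate from linearity of $q\mapsto\inner{p}{q}$ in $p$: for $\lambda>0$, $(\mathfrak{B}f)(\lambda p)=\sup_{q\neq 0}\lambda\inner{p}{q}/f(q)=\lambda(\mathfrak{B}f)(p)$.

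It remains to establish continuity~(4), and here I would prove the stronger statement that $\mathfrak{B}f$ is Lipschitz. For $p,p'\in\real^n_+$ and $q\neq 0$, write $\inner{p}{q}=\inner{p'}{q}+\inner{p-p'}{q}$, bound the last term by $\norm{p-p'}\,\norm{q}$ via Cauchy--Schwarz, and divide by $f(q)\geq c_1\norm{q}$ to get
\[
\frac{\inner{p}{q}}{f(q)}\leq\frac{\inner{p'}{q}}{f(q)}+\frac{\norm{p-p'}}{c_1}\;.
\]
Taking the supremum over $q\neq 0$ gives $(\mathfrak{B}f)(p)\leq(\mathfrak{B}f)(p')+c_1^{-1}\norm{p-p'}$, and by symmetry $\lvert(\mathfrak{B}f)(p)-(\mathfrak{B}f)(p')\rvert\leq c_1^{-1}\norm{p-p'}$, which gives (pointwise, and in fact uniform) continuity. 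Collecting the four items yields $\mathfrak{B}f\in\mathcal{O}$.

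There is essentially no hard step here: everything reduces to Cauchy--Schwarz plus the sandwich bound on $f$. The one point needing mild care is ensuring the supremum is genuinely finite, so that $\mathfrak{B}f$ takes values in $\real_+$ rather than $[0,+\infty]$; this is exactly what $f(q)\geq c_1\norm{q}$ guarantees. Alternatively one can use positive affinity of $f$ to rewrite $(\mathfrak{B}f)(p)=\sup\{\inner{p}{q}/f(q): q\in\real^n_+,\ \norm{q}=1\}$, a supremum of a continuous function over a compact set, which makes finiteness transparent and will also be convenient later when one wants the supremum to be attained.
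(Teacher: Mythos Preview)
Your proof is correct and follows essentially the same route as the paper: verify (1)--(4) directly, using the sandwich bound $c_1\norm{q}\leq f(q)\leq c_2\norm{q}$ together with Cauchy--Schwarz to obtain boundedness and a Lipschitz estimate for continuity, and testing $q=p$ for non-degeneracy. Your version is slightly more careful in explicitly checking that the supremum is finite before manipulating it, and your observation that continuity of $f$ itself is never used is accurate.
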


\begin{proof}
That $\mathfrak{B}f\geqslant 0$, is clear from the definition of $\mathfrak{B}$.
We now verify properties (1--4) for $\mathfrak{B}f$.

(1) {\em Positive affine}: Let $\lambda>0$. Then
\[
(\mathfrak{B}f)(\lambda p)=\sup\limits_{q\neq 0}\frac{\inner{\lambda p}{q}}{f(q)}=\lambda\sup\limits_{q\neq 0}\frac{\inner{ p}{q}}{f(q)}=\lambda (\mathfrak{B}f)(p)\;.
\]

(2) {\em Nondegenerate}: 
For $p\neq 0$, we have $(\mathfrak{B}f)(p)\geq \frac{\inner{p}{p}}{f(p)}>0$.

(3) {\em Bounded}: Since $c_1\norm{p}\leq f(p)\leq c_2\norm{p}$,  one has 
\[
c_2^{-1}\norm{p}\leq (\mathfrak{B}f)(p)\leq c_1^{-1}\norm{p}\;.
\]

(4) {\em Continuity}: For any $p, p'\in\real^n_+$,
\[
|(\mathfrak{B}f)(p)-(\mathfrak{B}f)(p')|\leq \sup_{q\neq 0} |\frac{\inner{p-p'}{q}}{f(q)}|
\leq c_1^{-1}\norm{p-p'}\;.
\]
\end{proof}

We say $f\leq g$ if  $f(p)\leq g(p)$ for every $p\in\real^n_+$.
\begin{prop}\label{prop:ddual}  Let $f, g\in \mathcal{O}$.   
If $f\leq g$, then $\mathfrak{B}f\geq \mathfrak{B}g $. Also 
$\mathfrak{B}^2f\leq f$,
and $\mathfrak{B}^3f=\mathfrak{B}f$.
\end{prop}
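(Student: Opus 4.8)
The plan is to prove the three claims in sequence, each one feeding into the next. For the antitone (order-reversing) property, suppose $f \leq g$. Then for every fixed $q \neq 0$ we have $\inner{p}{q}/f(q) \geq \inner{p}{q}/g(q)$ since $\inner{p}{q} \geq 0$ and $0 < g(q)$, hence $f(q) \leq g(q)$ forces the reciprocals to reverse. Taking the supremum over $q \neq 0$ on both sides preserves the inequality, so $(\mathfrak{B}f)(p) \geq (\mathfrak{B}g)(p)$ for all $p$, i.e. $\mathfrak{B}f \geq \mathfrak{B}g$. This is routine.

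For $\mathfrak{B}^2 f \leq f$, the natural route is a Young-type inequality. Fix $p \neq 0$. For \emph{any} $q \neq 0$, the definition of $\mathfrak{B}f$ gives $(\mathfrak{B}f)(q) \geq \inner{p}{q}/f(p)$ — note the roles of $p$ and $q$ are swapped here, taking $q$ as the ``running'' variable and $p$ as the fixed test point. Rearranging, $\inner{p}{q} \leq f(p)\,(\mathfrak{B}f)(q)$, hence $\inner{p}{q}/(\mathfrak{B}f)(q) \leq f(p)$ for every $q \neq 0$. Taking the supremum over $q \neq 0$ yields $(\mathfrak{B}^2 f)(p) = \bigl(\mathfrak{B}(\mathfrak{B}f)\bigr)(p) \leq f(p)$. (One should note $\mathfrak{B}f \in \mathcal{O}$ by Proposition \ref{prop:space}, so $\mathfrak{B}^2 f$ is well-defined; also $(\mathfrak{B}f)(q) > 0$ for $q \neq 0$ by nondegeneracy, so the quotients make sense.) The case $p = 0$ is trivial since both sides vanish.

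For $\mathfrak{B}^3 f = \mathfrak{B}f$, I would combine the two facts just established. Applying $\mathfrak{B}^2 g \leq g$ with $g = \mathfrak{B}f \in \mathcal{O}$ gives $\mathfrak{B}^3 f = \mathfrak{B}^2(\mathfrak{B}f) \leq \mathfrak{B}f$. For the reverse inequality, start from $\mathfrak{B}^2 f \leq f$ and apply the antitone property of $\mathfrak{B}$: it reverses the inequality, giving $\mathfrak{B}(\mathfrak{B}^2 f) \geq \mathfrak{B}f$, i.e. $\mathfrak{B}^3 f \geq \mathfrak{B}f$. The two inequalities together give $\mathfrak{B}^3 f = \mathfrak{B}f$.

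I do not expect a serious obstacle here; the only points requiring care are bookkeeping ones — making sure each intermediate function lies in $\mathcal{O}$ so that $\mathfrak{B}$ may be applied again (handled by Proposition \ref{prop:space}), confirming denominators are strictly positive (nondegeneracy), and correctly tracking which variable is held fixed versus which is the supremum variable in the Young-type step, since that swap of roles is what makes $\mathfrak{B}^2 f \leq f$ rather than an equality in general.
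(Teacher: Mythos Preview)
Your proof is correct and follows essentially the same approach as the paper: the antitone property is immediate from the definition, the inequality $\mathfrak{B}^2 f \leq f$ comes from bounding the inner supremum below by the particular choice $r = p$, and $\mathfrak{B}^3 f = \mathfrak{B}f$ is obtained by combining $\mathfrak{B}^2(\mathfrak{B}f) \leq \mathfrak{B}f$ with the antitone property applied to $\mathfrak{B}^2 f \leq f$. Your write-up is in fact more careful than the paper's about well-definedness and the $p=0$ case.
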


\begin{proof}
The fact that $\mathfrak{B}f\geq \mathfrak{B}g $ follows from the definition of $\mathfrak{B}$.  Also  
 \[
 (\mathfrak{B}^2f)(p)
 =\sup_{q\neq 0} \frac{\inner{p}{q}}{(\mathfrak{B}f)(q)}
 =\sup_{q\neq 0}\frac{\inner{p}{q}}{\sup\limits_{r\neq0}\frac{\inner{r}{q}}{f(r)}}\leq \sup_{q\neq 0}\frac{\inner{p}{q}}{\frac{\inner{p}{q}}{f(p)}}=f(p)\;.
 \]
 Since  $\mathfrak{B}^3f\leq\mathfrak{B}f$ and $\mathfrak{B}^2f\leq f$, we infer that $\mathfrak{B}^3f\geq\mathfrak{B}f$.
\end{proof}
%
%
%

Now, let us focus on the set 
\[
\Delta:=\set{f\in \mathcal{O}| \ \mathfrak{B}^2f=f}\;.
\]
Note that $\alpha_G, \vartheta_G, \alpha^*_G\in \Delta$,
according to Theorem~\ref{thm:g_dual}.

%

\begin{prop}
For $f\in \mathcal{O}$, $\mathfrak{B}^2f$ can be defined by the following universal properties:

{\rm(1)} Both $\mathfrak{B}^2f\in \Delta$, and $\mathfrak{B}^2f\leq f$, as well as 

{\rm(2)} For any $g\in \Delta$ with $g\leq f$, it is true that $g\leq \mathfrak{B}^2f$.
\end{prop}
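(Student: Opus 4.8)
The plan is to show that $\mathfrak{B}^2 f$ is the largest element of $\Delta$ lying below $f$, by combining the identities already established in Propositions~\ref{prop:space} and~\ref{prop:ddual}. First I would verify property (1). Proposition~\ref{prop:ddual} gives $\mathfrak{B}^2 f\leq f$ directly. To see that $\mathfrak{B}^2 f\in\Delta$, I apply $\mathfrak{B}^2$ to $g:=\mathfrak{B}^2 f$ and use the relation $\mathfrak{B}^3 f=\mathfrak{B} f$ twice: indeed $\mathfrak{B}^2(\mathfrak{B}^2 f)=\mathfrak{B}(\mathfrak{B}^3 f)=\mathfrak{B}(\mathfrak{B} f)=\mathfrak{B}^2 f$, so $\mathfrak{B}^2 f$ is a fixed point of $\mathfrak{B}^2$ and hence lies in $\Delta$. (One should also note $\mathfrak{B}^2 f\in\mathcal{O}$, which is immediate from two applications of Proposition~\ref{prop:space}.)

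Next I would prove the maximality property (2). Suppose $g\in\Delta$ with $g\leq f$. Since $g\leq f$, the order-reversal part of Proposition~\ref{prop:ddual} gives $\mathfrak{B} g\geq \mathfrak{B} f$, and applying $\mathfrak{B}$ once more reverses the inequality again, so $\mathfrak{B}^2 g\leq \mathfrak{B}^2 f$. But $g\in\Delta$ means precisely $\mathfrak{B}^2 g=g$, so we conclude $g=\mathfrak{B}^2 g\leq\mathfrak{B}^2 f$, which is exactly the claim.

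There is no serious obstacle here: the statement is a formal consequence of the three facts $\mathfrak{B}^2 f\leq f$, monotonicity-reversal of $\mathfrak{B}$, and $\mathfrak{B}^3 f=\mathfrak{B} f$, all of which are in hand. The only point requiring a moment's care is the direction of the inequalities when $\mathfrak{B}$ is applied an even number of times versus an odd number: applying $\mathfrak{B}$ twice to an inequality preserves it, applying it once flips it. If one wanted to be thorough one could also remark that these two universal properties characterize $\mathfrak{B}^2 f$ uniquely — any two elements of $\Delta$ below $f$ that are each maximal must be equal — but that uniqueness is automatic from the definition of "largest element" and need not be belabored.
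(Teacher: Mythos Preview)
Your proof is correct and follows essentially the same route as the paper's own argument: both derive property~(1) from Proposition~\ref{prop:ddual} (using $\mathfrak{B}^2f\leq f$ and $\mathfrak{B}^3f=\mathfrak{B}f$), and both derive property~(2) by applying the order-reversal of $\mathfrak{B}$ to $g\leq f$ and then invoking $g=\mathfrak{B}^2g$. Your write-up is slightly more explicit (noting $\mathfrak{B}^2f\in\mathcal{O}$ via Proposition~\ref{prop:space} and tracking the parity of applications of $\mathfrak{B}$), but there is no substantive difference in method.
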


\begin{proof}
From Proposition \ref{prop:ddual}, we infer that  (1) holds. Since $g\leq f$ and $\mathfrak{B}g \geq \mathfrak{B}f$, it follows that $g=\mathfrak{B}^2g \leq \mathfrak{B}^2f$.
\end{proof}

Now we prove Theorem \ref{thm:final} by steps.

\begin{Def}
Let $ f\in \mathcal{O}, \lambda>0$, denote
\begin{eqnarray}
C_f(\lambda)&:=&\set{p\in\real^n_+| \ f(p)<\lambda}\;. \\
\partial C_f(\lambda)&:=& \set{p\in\real^n_+| \ f(p)=\lambda}\;.
\end{eqnarray}
\end{Def}

\begin{lem}\label{lem: convex}
Let $f\in \mathcal{O}$. The function $f$ is convex, iff  $C_f(\lambda)$ is convex for every $\lambda >0$.
\end{lem}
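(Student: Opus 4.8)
The plan is to prove the two implications separately. The forward direction is the standard fact that a convex function has convex sublevel sets, and I expect no difficulty there: given $\lambda>0$, points $p_1,p_2\in C_f(\lambda)$, and $\mu\in[0,1]$, convexity of $f$ yields $f(\mu p_1+(1-\mu)p_2)\le\mu f(p_1)+(1-\mu)f(p_2)<\lambda$, so $\mu p_1+(1-\mu)p_2\in C_f(\lambda)$; this step uses none of the other properties defining $\mathcal{O}$.

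The reverse direction carries the real content, and the key point — the main obstacle for anyone trying the naive route — is that convexity of all sublevel sets (``quasiconvexity'') does \emph{not} by itself imply convexity of $f$, so one must bring in the positive affine property. My plan has three steps. \emph{Step 1:} observe that convexity of every strict sublevel set forces convexity of every closed sublevel set as well, via $\{p\in\real^n_+:f(p)\le\lambda\}=\bigcap_{\lambda'>\lambda}C_f(\lambda')$, an intersection of convex sets. \emph{Step 2:} reduce convexity of $f$ to subadditivity, $f(p_1+p_2)\le f(p_1)+f(p_2)$: since $f(\mu p)=\mu f(p)$ for $\mu>0$, subadditivity gives $f(\mu p_1+(1-\mu)p_2)\le f(\mu p_1)+f((1-\mu)p_2)=\mu f(p_1)+(1-\mu)f(p_2)$ for $\mu\in(0,1)$, the endpoints $\mu\in\{0,1\}$ being trivial.

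\emph{Step 3 (the crux):} prove subadditivity by normalizing against the unit sublevel set. Fix nonzero $p_1,p_2$; by nondegeneracy $a:=f(p_1)$ and $b:=f(p_2)$ are positive, and by homogeneity $f(p_1/a)=f(p_2/b)=1$, so $p_1/a$ and $p_2/b$ both lie in the convex set $\{f\le 1\}$ from Step 1. Taking the convex combination with weights $a/(a+b)$ and $b/(a+b)$,
\[
\frac{a}{a+b}\cdot\frac{p_1}{a}+\frac{b}{a+b}\cdot\frac{p_2}{b}=\frac{p_1+p_2}{a+b}\in\{f\le 1\}\;,
\]
so $f\big((p_1+p_2)/(a+b)\big)\le 1$, and homogeneity gives $f(p_1+p_2)\le a+b=f(p_1)+f(p_2)$; the case where $p_1$ or $p_2$ vanishes is immediate. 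Combined with Step 2 this yields convexity of $f$.

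One place where care is needed is Step 1: if one skips it and argues directly, the normalization in Step 3 lands on $\partial C_f(1)$ rather than inside $C_f(1)$, leaving a gap. An alternative that sidesteps closed sublevel sets entirely is to work with $p_1/(a+\varepsilon)\in C_f(1)$ and $p_2/(b+\varepsilon)\in C_f(1)$ and let $\varepsilon\downarrow 0$, using continuity of $f$ to pass to the limit.
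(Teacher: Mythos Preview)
Your proof is correct. Both your argument and the paper's hinge on the same idea: use positive homogeneity to rescale the two points so that they sit in a common sublevel set, then invoke convexity of that set. The executions differ in organization. You factor the reverse direction as \emph{closed sublevel sets are convex} $\Rightarrow$ \emph{subadditivity} $\Rightarrow$ \emph{convexity}, normalizing $p_1,p_2$ to the unit level set $\{f\le 1\}$; the paper instead rescales $p_2$ by $\alpha$ so that $f(\alpha p_2)=f(p_1)$, works directly with the convex combination $\mu p_1+(1-\mu)p_2$ inside a \emph{strict} sublevel set $C_f(\lambda)$ with $\lambda>f(p_1)$, and then lets $\lambda\downarrow f(p_1)$ at the end. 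Your Step~1 (passing to closed sublevel sets via $\{f\le\lambda\}=\bigcap_{\lambda'>\lambda}C_f(\lambda')$) plays the same role as the paper's final limit in $\lambda$ --- indeed, the alternative you sketch with $p_i/(f(p_i)+\varepsilon)$ and $\varepsilon\downarrow 0$ is essentially the paper's route. Your decomposition through subadditivity is a bit more modular and makes the role of homogeneity explicit; the paper's version is slightly more direct but less transparently structured.
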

\begin{proof}
If the function $f$ is convex, then convexity of  $C_f(\lambda)$ is a consequence of the convexity of $f$.
In the converse direction, we need only prove that for every $p_1, p_2\in \real^n_+$, and for every $\mu\in [0,1]$,
\begin{eqnarray*}
f(\mu p_1+(1-\mu) p_2)\leq \mu f(p_1)+(1-\mu) f(p_2)\;.
\end{eqnarray*}

 If either $p_1$ or $p_2$ equals $0$, e.g.,
$p_2=0$, then $f(\mu p_1+(1-\mu) p_2)= \mu f(p_1)$.
Otherwise, both $p_1, p_2 \neq 0$, and there exists
$\alpha>0$ such that  $f(\alpha p_2)=f(p_1)$.  Moreover, there exists  $\lambda>0$  such that
$f(p_1)<\lambda$.  Therefore,
\begin{eqnarray*}
&&\hskip-.5inf(\mu p_1+(1-\mu)p_2)
=f\left(\mu p_1+\frac{(1-\mu)}{\alpha}(\alpha p_2)\right)\\
&=&\left(\mu +\frac{(1-\mu)}{\alpha}\right)f\left( \frac{\mu}{\mu+(1-\mu)/\alpha} p_1+\frac{(1-\mu)/\alpha}{\mu+(1-\mu)/\alpha}\alpha p_2\right)\;.
\end{eqnarray*}
Since $p_1, \alpha p_2\in C_f(\lambda)$, then
$\frac{\mu}{\mu+(1-\mu)/\alpha} p_1+\frac{(1-\mu)/\alpha}{\mu+(1-\mu)/\alpha}\alpha p_2\in C_f(\lambda)$.
That is
\begin{eqnarray*}
 f(\mu p_1+(1-\mu)p_2)<\left(\mu +\frac{(1-\mu)}{\alpha}\right) \lambda\;.
 \end{eqnarray*}
Since we can choose  $\lambda$ arbitrarily,
\begin{eqnarray*}
f(\mu p_1+(1-\mu)p_2)&\leq& \left(\mu +\frac{(1-\mu)}{\alpha}\right)f(p_1)\\
&=&\mu f(p_1)+(1-\mu)f(p_2)\;,
\end{eqnarray*}
completing the proof.
\end{proof}

\begin{lem}
Let $f\in \mathcal{O}$, and $\lambda>0$.  Then $C_f(\lambda)$ is connected.
\end{lem}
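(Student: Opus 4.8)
The plan is to show that $C_f(\lambda)$ is star-shaped with respect to the origin, which immediately yields path-connectedness and hence connectedness. First I would record that the positive affine property forces $f(0)=0$: applying $f(\mu p)=\mu f(p)$ with $p=0$ gives $f(0)=\mu f(0)$ for every $\mu>0$, so $f(0)=0<\lambda$, and therefore $0\in C_f(\lambda)$. (Boundedness also shows that $C_f(\lambda)$ contains a neighborhood of $0$, though this is not strictly needed for connectedness.)

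Next, fix an arbitrary $p\in C_f(\lambda)$ and $t\in[0,1]$, and show that $tp\in C_f(\lambda)$. If $t=0$, then $tp=0\in C_f(\lambda)$ by the previous step. If $t\in(0,1]$, the positive affine property gives $f(tp)=t\,f(p)\leq f(p)<\lambda$, where the inequality uses $f(p)\geq 0$ together with $t\leq 1$; hence $tp\in C_f(\lambda)$. Consequently the entire segment $\{\,tp : t\in[0,1]\,\}$ joining $0$ to $p$ is contained in $C_f(\lambda)$.

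Finally, since every point of $C_f(\lambda)$ can be joined to the fixed point $0$ by a line segment lying inside $C_f(\lambda)$, the set $C_f(\lambda)$ is star-shaped about $0$; a star-shaped set is path-connected, and therefore connected. This completes the argument.

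The argument relies only on positive homogeneity and uses neither convexity nor continuity in an essential way, so there is essentially no obstacle. The one subtlety worth flagging is the degenerate endpoint $t=0$: the homogeneity identity $f(tp)=t\,f(p)$ is stated only for $t>0$, so the case $t=0$ must be dispatched separately via the observation $f(0)=0$.
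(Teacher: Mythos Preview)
Your proof is correct and is essentially the same argument as the paper's: both show that $C_f(\lambda)$ is star-shaped about the origin via the positive affine property, and conclude path-connectedness. Your version is simply more careful about the endpoint $t=0$ and about explicitly checking $0\in C_f(\lambda)$, details the paper leaves implicit.
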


\begin{proof}
If $p \in C_f(\lambda)$, then for any $0<\mu < 1$, $\mu p \in C_f(\lambda)$. Thus, $p$ and $0$ are path-connected, and $C_f(\lambda)$ is connected.
\end{proof}

\begin{thm}\label{12}
Let $f\in \mathcal{O}$.  Then
$f\in \Delta $  iff  for any $p\in\real^n_+$ with $ p\neq 0$,  there exists a hyperplane $H=\set{r\in \real^n_+| \inner{r}{q_0}=\inner{p}{q_0}}$
such that $f(p)=\inf\limits_{r\in H}f(r)$ and $q_0\in \real^n_+$.

\end{thm}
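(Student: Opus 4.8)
The plan is to lean on the inequality $\mathfrak{B}^2 f\leq f$ already proved in Proposition~\ref{prop:ddual}, so that $f\in\Delta$ is equivalent to the pointwise lower bound $(\mathfrak{B}^2f)(p)\geq f(p)$ for every $p\neq 0$. Expanding the definition, $(\mathfrak{B}^2 f)(p)=\sup_{q\neq 0}\inner{p}{q}/(\mathfrak{B}f)(q)$, and since each $q$ also satisfies $\inner{r}{q}/f(r)\leq(\mathfrak{B}f)(q)$ for all $r$, one sees that the relevant question is precisely \emph{when the supremum defining $\mathfrak{B}^2 f$ at $p$ is attained and equals $f(p)$}. The two directions of the iff then correspond to extracting a maximizing $q_0$ on one hand and using a prescribed $q_0$ on the other.

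For the forward direction I would fix $f\in\Delta$ and $p\neq 0$. Because $\mathfrak{B}f$ is positive affine (Proposition~\ref{prop:space}), the map $q\mapsto\inner{p}{q}/(\mathfrak{B}f)(q)$ is scale invariant, so its supremum over $q\neq 0$ is the supremum over the compact set $\{q\in\real^n_+:\norm{q}=1\}$; there $\mathfrak{B}f$ is continuous and bounded below by $c_2^{-1}$, so the supremum is attained at some unit vector $q_0$. From $\inner{p}{q_0}/(\mathfrak{B}f)(q_0)=f(p)>0$ we get $\inner{p}{q_0}>0$. Unwinding $(\mathfrak{B}f)(q_0)=\sup_{r\neq 0}\inner{r}{q_0}/f(r)$ gives $\inner{r}{q_0}/f(r)\leq\inner{p}{q_0}/f(p)$ for every $r\neq 0$; restricting to the hyperplane $H=\set{r\in\real^n_+\mid\inner{r}{q_0}=\inner{p}{q_0}}$ yields $f(r)\geq f(p)$, and since $p\in H$ this is exactly $f(p)=\inf_{r\in H}f(r)$.

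For the converse, given for each $p\neq 0$ such a hyperplane $H$ with $q_0\in\real^n_+$ and $f(p)=\inf_{r\in H}f(r)$, I would first observe that $\inner{p}{q_0}>0$: otherwise $0\in H$, forcing $\inf_{r\in H}f(r)=f(0)=0<f(p)$. Then, rescaling any $r$ with $\inner{r}{q_0}>0$ to the point $(\inner{p}{q_0}/\inner{r}{q_0})\,r\in H$ and using positive affineness gives $f(r)\geq f(p)\,\inner{r}{q_0}/\inner{p}{q_0}$ for all $r\in\real^n_+$ (the case $\inner{r}{q_0}=0$ being trivial). Hence $(\mathfrak{B}f)(q_0)\leq\inner{p}{q_0}/f(p)$, so $(\mathfrak{B}^2 f)(p)\geq\inner{p}{q_0}/(\mathfrak{B}f)(q_0)\geq f(p)$; together with $\mathfrak{B}^2 f\leq f$ this gives $f\in\Delta$.

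I expect the only genuine subtlety to be the compactness/attainment step in the forward direction — passing from the equality $(\mathfrak{B}^2f)(p)=f(p)$ to an actual witness $q_0$ realizing it — together with the bookkeeping that guarantees $\inner{p}{q_0}>0$, without which $H$ would pass through the origin and its infimum of $f$ would be $0$. The rest is routine manipulation of the sup-definition of $\mathfrak{B}$ and of positive homogeneity in $\mathcal{O}$.
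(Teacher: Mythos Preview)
Your proposal is correct and follows essentially the same route as the paper: reduce $f\in\Delta$ to $(\mathfrak{B}^2f)(p)\geq f(p)$ via Proposition~\ref{prop:ddual}, then rewrite $(\mathfrak{B}^2f)(p)$ as $\sup_{q}\inf_{r}\frac{\inner{p}{q}}{\inner{q}{r}}f(r)$ and pass between a maximizing $q_0$ and the hyperplane $H$ using positive affineness. You are in fact more careful than the paper on the one nontrivial point---attainment of the supremum in the forward direction via compactness of the unit sphere in $\real^n_+$ and continuity of $\mathfrak{B}f$---which the paper simply asserts.
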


\begin{proof}
Since $\mathfrak{B}^2f\leq f$, then $f\in \Delta$ iff $\mathfrak{B}^2f\geq f$.  If $\mathfrak{B}^2f\geq f$, then for any $p\in \real^n_+$,
\begin{eqnarray*}
(\mathfrak{B}^2f)(r)=\sup_{q\in \mathbb{R}^n_+, q\neq 0}\inf_{r\neq 0}
\frac{\inner{p}{q}}{\inner{q}{r}}f(r)\geq f(p)\;.
\end{eqnarray*}
Thus, there exists $q_0\in \real^n_+$ such that $\inf\limits_{r\neq 0 } \frac{\inner{p}{q_0}}{\inner{q_0}{r}}f(r)\geq f(p)$,
which leads to
 \begin{eqnarray*}
 \inf_{\inner{r}{q_0}=\inner{p}{q_0}}f(r)\geq f(p)\;.
 \end{eqnarray*}
Let  $H=\set{r\in \real^n_+| \inner{r}{q_0}=\inner{p}{q_0}}$ , then $p\in H$ and $f(p)=\inf\limits_{q\in H}f(q)$ .

On the other hand,
if there exists a hyperplane 
	\[
	H=\set{r\in \real^n_+| \inner{r}{q_0}=\inner{p}{q_0}}
	\]  
such that  $f(p)=\inf\limits_{r\in H}f(r)$ and $q_0\in \real^n_+$, then
 \begin{eqnarray*}
 (\mathfrak{B}^2f)(p)
 =\sup\limits_{q \in \mathbb{R}^n_+ q\neq 0}\inf\limits_{r'\neq 0}\frac{\inner{p}{q}}{\inner{q}{r'}}f(r') \geq\inf\limits_{r'\neq 0} \frac{\inner{p}{q_0}}{\inner{q_0}{r'}}f(r')= \inf\limits_{r'\neq 0}f\left( \frac{\inner{p}{q_0}}{\inner{q_0}{r'}}r'\right)\geq f(p)\;.
 \end{eqnarray*}
The last inequality comes from the fact that $\frac{\inner{p}{q_0}}{\inner{q_0}{r'}}r' \in H$.
\end{proof}

We infer from Theorem \ref{12} that  for any $f\in \Delta$ and  $p\in \real^n_+$,  there exists a vector $q_0\in \real^n_+$ such that the hyperplane $H=\set{r\in \real^n_+| \inner{r}{q_0}=\inner{p}{q_0}}$
contains the point $p$, and $f(p)=\min\limits_{r\in H}f(r)$. We denote this hyperplane by $H_p$.
And we also denote $H^+_p$ and $H^{-}_p$ as follows:
\begin{eqnarray*}
H^+_p&:=\set{r\in \real^n_+| \inner{r}{q_0}>\inner{p}{q_0}}\;,\quad
H^-_p&:=\set{r\in \real^n_+| \inner{r}{q_0}<\inner{p}{q_0}}\;.
\end{eqnarray*}

\begin{prop}\label{13}
If $f\in \Delta $ and $0 \neq p\in\real^n_+$, then
$C_f(f(p))\subset H^-_p$.
\end{prop}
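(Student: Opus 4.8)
The plan is to show that no point of $C_f(f(p))$ can lie in $H_p\cup H^+_p$, so that it must lie in $H^-_p$. Recall from Theorem~\ref{12} (and the remark following it) that since $f\in\Delta$, there is a vector $q_0\in\real^n_+$ defining the hyperplane $H_p=\set{r\in\real^n_+|\inner{r}{q_0}=\inner{p}{q_0}}$ with $f(p)=\min_{r\in H_p}f(r)$. First I would dispose of $H_p$ itself: for every $r\in H_p$ we have $f(r)\geq f(p)$ by minimality, so $r\notin C_f(f(p))$; hence $C_f(f(p))\cap H_p=\emptyset$.

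The main work is to rule out $H^+_p$, and this is where I expect the key obstacle to lie. Suppose for contradiction that some $r\in C_f(f(p))$ satisfies $\inner{r}{q_0}>\inner{p}{q_0}$. The idea is to rescale $r$ down toward the origin until it lands on the hyperplane $H_p$: set $\lambda=\inner{p}{q_0}/\inner{r}{q_0}\in(0,1)$, so that $\inner{\lambda r}{q_0}=\inner{p}{q_0}$, i.e. $\lambda r\in H_p$. By positive affineness (property (1) of $\mathcal{O}$), $f(\lambda r)=\lambda f(r)<\lambda f(p)<f(p)$, using $f(r)<f(p)$ and $\lambda<1$. But $\lambda r\in H_p$ forces $f(\lambda r)\geq f(p)$ by the minimality property from Theorem~\ref{12}. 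This contradiction shows $C_f(f(p))\cap H^+_p=\emptyset$.

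Combining the two cases, $C_f(f(p))$ is disjoint from $H_p$ and from $H^+_p$; since $\real^n_+=H^-_p\sqcup H_p\sqcup H^+_p$, we conclude $C_f(f(p))\subset H^-_p$, which is the claim. The only subtlety to double-check is nondegeneracy of the relevant quantities: $\inner{r}{q_0}>0$ since $\inner{r}{q_0}>\inner{p}{q_0}\geq 0$, so $\lambda$ is well-defined and positive; and $f(r)<f(p)$ is exactly the defining condition $r\in C_f(f(p))$. No appeal to convexity or monotonicity of $f$ is needed here — the argument rests only on membership in $\Delta$ (via Theorem~\ref{12}) and on $f$ being positive affine.
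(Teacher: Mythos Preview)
Your proof is correct and takes a genuinely different route from the paper's. The paper argues as follows: it observes that $0\in C_f(f(p))\cap H^-_p$, that $H_p\cap C_f(f(p))=\emptyset$ by minimality (your first step), and then invokes the preceding lemma that $C_f(\lambda)$ is \emph{connected} to conclude that $C_f(f(p))$ cannot meet $H^+_p$ either. In other words, the paper separates $\real^n_+$ by the hyperplane and uses a topological argument. You instead bypass connectedness entirely: by rescaling a hypothetical $r\in C_f(f(p))\cap H^+_p$ down to $\lambda r\in H_p$ via positive affineness, you obtain a direct algebraic contradiction with the minimality of $f(p)$ on $H_p$. Your approach is more self-contained (it does not need the connectedness lemma) and makes explicit why the positive-affine structure of $\mathcal{O}$ is doing the work.

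One small point to tighten: you assert $\lambda\in(0,1)$ but only justify that the denominator $\inner{r}{q_0}$ is positive. To ensure $\lambda>0$ you need $\inner{p}{q_0}>0$. This holds because otherwise $0\in H_p$, whence $f(p)=\min_{r\in H_p}f(r)\leq f(0)=0$, contradicting nondegeneracy of $f$ at $p\neq 0$. With that observation your argument is complete.
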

\begin{proof}
It is easy to see that $0\in C_f(f(p))\cap H^-_p$.
Besides,
since $f(p)=\inf\limits_{q\in H_p}f(q)$, then $H_p\cap C_f(f(p))= \emptyset$. Also $C_f(f(p))$ is connected, so $H_p^+\cap C_f(f(p))= \emptyset$. Therefore  $C_f(f(p))\subset H^-_p$.
\end{proof}

\begin{prop}\label{prop:C_f}\textsf{}
If $f\in \Delta$, then
\begin{eqnarray}
C_f(\lambda)= \real^n_+ \cap \bigcap\limits_{p\in \partial C_f(\lambda)}H_p^-\; ,\; \forall \lambda>0\;.
\end{eqnarray}
\end{prop}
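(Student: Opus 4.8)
The plan is to prove the two inclusions separately, using Proposition~\ref{13} for one direction and the positive affinity of $f$ for the other. First I would record two preliminary remarks that make the right-hand side meaningful. For any fixed $p_0\in\real^n_+$ with $p_0\neq 0$, the map $t\mapsto f(tp_0)=tf(p_0)$ is a continuous increasing bijection of $[0,\infty)$ onto itself, so the value $\lambda$ is attained and $\partial C_f(\lambda)\neq\emptyset$; moreover $f(0)=0<\lambda$ by positive affinity, so $0\notin\partial C_f(\lambda)$, and hence every $p\in\partial C_f(\lambda)$ is nonzero and carries a well-defined hyperplane $H_p$ and half-space $H_p^-$ in the sense of the remark following Theorem~\ref{12}.

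For the inclusion $C_f(\lambda)\subseteq\real^n_+\cap\bigcap_{p\in\partial C_f(\lambda)}H_p^-$: clearly $C_f(\lambda)\subseteq\real^n_+$, and for each $p\in\partial C_f(\lambda)$ we have $f(p)=\lambda$, so Proposition~\ref{13} applied to the nonzero vector $p$ gives $C_f(\lambda)=C_f(f(p))\subseteq H_p^-$; intersecting over all such $p$ yields the claim.

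For the reverse inclusion I would take $x\in\real^n_+$ lying in $H_p^-$ for every $p\in\partial C_f(\lambda)$ and argue by contradiction, assuming $f(x)\geq\lambda$. Since $\lambda>0$ this forces $x\neq 0$; set $t:=\lambda/f(x)\in(0,1]$ and $p:=tx\in\real^n_+$, so that $f(p)=tf(x)=\lambda$, hence $p\in\partial C_f(\lambda)$ and $p\neq 0$. Let $q_0\in\real^n_+$ be the vector supplied by Theorem~\ref{12}, with $H_p=\{r\in\real^n_+ : \inner{r}{q_0}=\inner{p}{q_0}\}$ and $f(p)=\inf_{r\in H_p}f(r)$. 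One checks $\inner{p}{q_0}>0$, since otherwise $0\in H_p$ would give $\inf_{r\in H_p}f(r)\leq f(0)=0$, contradicting $f(p)=\lambda>0$. But then $x=t^{-1}p$ with $t^{-1}\geq 1$, so $\inner{x}{q_0}=t^{-1}\inner{p}{q_0}\geq\inner{p}{q_0}$, that is $x\notin H_p^-$, contradicting the choice of $x$. Hence $f(x)<\lambda$, i.e.\ $x\in C_f(\lambda)$, completing the reverse inclusion.

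I expect no serious obstacle here: the content of one inclusion is already contained in Proposition~\ref{13}, and the homogeneity of $f$ collapses the other to a single rescaling that lands a putative counterexample onto $\partial C_f(\lambda)$. The only steps requiring attention are the degenerate cases --- confirming $\partial C_f(\lambda)$ is nonempty and excludes $0$, and verifying $\inner{p}{q_0}>0$ so that $H_p^-$ is genuinely a proper open half-space and the strict inequality defining it really does fail at $x=t^{-1}p$ --- and both follow immediately from nondegeneracy and positive affinity.
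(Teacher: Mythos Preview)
Your proof is correct and follows essentially the same approach as the paper: both directions use Proposition~\ref{13} for the forward inclusion and, for the reverse, rescale a putative counterexample $x$ by $t=\lambda/f(x)\in(0,1]$ to land on $\partial C_f(\lambda)$ and observe that $x\notin H_{tx}^-$. Your additional care in verifying $\partial C_f(\lambda)\neq\emptyset$, $0\notin\partial C_f(\lambda)$, and $\inner{p}{q_0}>0$ fills in details the paper leaves implicit, but the argument is the same.
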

\begin{proof}
From Proposition \ref{13},  we infer that  $ C_f(\lambda)\subseteq H_p^-$ for every $\lambda>0$. Thus $C_f(\lambda)\subseteq \real^n_+ \cap \bigcap\limits_{p\in \partial C_f(\lambda)}H_p^-$.
On the other hand, if there exists a vector $q\in \real^n_+ \cap \bigcap\limits_{p\in \partial C_f(\lambda)}H_p^-$ such that $q\notin C_f(\lambda)$, then $\exists~ 0< \alpha \leq 1$ such that $f(\alpha q)=\lambda$, i.e.,  $\alpha q\in \partial C_f(\lambda)$. Since $\alpha \leq 1$, then $q\notin H^-_{\alpha q}$, which contradicts to the choice of $q\in \real^n_+ \cap \bigcap\limits_{p\in \partial C_f(\lambda)}H_p^-$.
\end{proof}

\begin{cor} Let  $f\in \Delta$, then $f$ is convex.

\end{cor}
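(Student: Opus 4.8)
The plan is to reduce convexity of $f$ to convexity of the sublevel sets $C_f(\lambda)$ via Lemma~\ref{lem: convex}, and then to read off convexity of each $C_f(\lambda)$ from the representation established in Proposition~\ref{prop:C_f}. Concretely, for a fixed $\lambda>0$, Proposition~\ref{prop:C_f} gives
\[
C_f(\lambda)=\real^n_+\cap\bigcap_{p\in\partial C_f(\lambda)}H_p^-\;,
\]
so $C_f(\lambda)$ is an intersection of a family of sets each of which is convex. Indeed $\real^n_+$ is convex, and each half-space $H_p^-=\set{r\in\real^n_+\mid \inner{r}{q_0}<\inner{p}{q_0}}$ is the intersection of the open half-space $\set{r\in\real^n\mid \inner{r}{q_0}<\inner{p}{q_0}}$ with $\real^n_+$, hence convex. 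An arbitrary intersection of convex sets is convex, so $C_f(\lambda)$ is convex for every $\lambda>0$. Lemma~\ref{lem: convex} then yields that $f$ is convex, which is exactly the claim.

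The one point that needs a little care — and the only place I expect any friction — is the degenerate case $\lambda$ for which $\partial C_f(\lambda)$ might behave oddly, and the interplay between strict and non-strict inequalities. Since $f$ is nondegenerate, bounded, continuous and positive affine, $C_f(\lambda)$ is a nonempty open (in $\real^n_+$) set containing $0$, and $\partial C_f(\lambda)$ is nonempty for every $\lambda>0$; so the indexing family in Proposition~\ref{prop:C_f} is nonempty and the intersection formula is not vacuous. (If one preferred closed sublevel sets one would run the same argument with closed half-spaces; the open version used here is what matches Proposition~\ref{prop:C_f}, and it suffices because convexity of a set is equivalent to convexity of its sublevel description regardless of open/closed conventions.) No new estimates are required.

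In short, the argument is: \emph{Proposition~\ref{prop:C_f} $\Rightarrow$ each $C_f(\lambda)$ is an intersection of convex sets $\Rightarrow$ each $C_f(\lambda)$ is convex $\Rightarrow$ (Lemma~\ref{lem: convex}) $f$ is convex.} This completes the chain; combined with Theorem~\ref{12} and Proposition~\ref{13} it also shows that membership in $\Delta$ forces both monotonicity-type supporting-hyperplane structure and convexity, which is the content needed for the ``only if'' direction of Theorem~\ref{thm:final}.
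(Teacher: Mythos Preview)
Your proposal is correct and follows exactly the paper's approach: it invokes Proposition~\ref{prop:C_f} to write $C_f(\lambda)$ as an intersection of convex sets (the $H_p^-$ and $\real^n_+$), concludes each $C_f(\lambda)$ is convex, and then applies Lemma~\ref{lem: convex}. The paper's proof is the one-line version of precisely this argument.
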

\begin{proof}
This corollary comes directly from Lemma \ref{lem: convex}, Proposition \ref{prop:C_f}, and the fact
$H^{-}_p$ is convex.
\end{proof}

\begin{Def} A convex subset $C\subset \real^n_{+}$ is positive convex, if
 for any $p_0\notin C$, there exists $ q\in \real^n_{+}$, and $\gamma>0$, such that both $\inner{p_0}{q}\geqslant \gamma$, and also $\inner{p}{q}<\gamma$ for all $p\in C$.
\end{Def}

\begin{thm}\label{1.1} If $f \in \mathcal{O}$, then $f\in \Delta $ iff $C_f(\lambda)$ is positive convex for evrery $\lambda>0$.
\end{thm}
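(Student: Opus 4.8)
The plan is to prove both implications of Theorem~\ref{1.1} by reducing to the hyperplane criterion of Theorem~\ref{12}, using along the way the structural facts already established for elements of $\Delta$: Proposition~\ref{13}, Proposition~\ref{prop:C_f}, Lemma~\ref{lem: convex}, and the Corollary that every $f\in\Delta$ is convex. The organizing idea is that positive convexity of a sublevel set $C_f(\lambda)$ is precisely a Hahn--Banach separation in which the separating functional is constrained to lie in $\real^n_+$, and a hyperplane of exactly that form is what Theorem~\ref{12} demands.

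\emph{From $f\in\Delta$ to positive convexity of $C_f(\lambda)$.} Fix $\lambda>0$. Convexity of $C_f(\lambda)$ is immediate from the Corollary together with Lemma~\ref{lem: convex}. For the separation property I would take $p_0\in\real^n_+\setminus C_f(\lambda)$, so that $f(p_0)\geq\lambda>0$, and set $\alpha:=\lambda/f(p_0)\in(0,1]$, which gives $f(\alpha p_0)=\lambda$, i.e., $\alpha p_0\in\partial C_f(\lambda)$. Applying Proposition~\ref{13} to the point $\alpha p_0$ yields $C_f(\lambda)=C_f(f(\alpha p_0))\subset H^-_{\alpha p_0}$; writing $q_0\in\real^n_+$ for the vector defining $H_{\alpha p_0}$ and setting $\gamma:=\inner{\alpha p_0}{q_0}$, one has $\inner{p}{q_0}<\gamma$ for every $p\in C_f(\lambda)$, while $\inner{p_0}{q_0}=\alpha^{-1}\gamma\geq\gamma$. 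The one point to check is $\gamma>0$: were $\inner{\alpha p_0}{q_0}=0$, the hyperplane $H_{\alpha p_0}$ would contain $0$, forcing $\lambda=f(\alpha p_0)=\inf_{r\in H_{\alpha p_0}}f(r)\leq f(0)=0$ (and $f(0)=0$ since $f$ is positive affine), a contradiction.

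\emph{From positive convexity to $f\in\Delta$.} Assume every $C_f(\lambda)$ is positive convex; I would verify the criterion of Theorem~\ref{12}. Fix $p\neq 0$ and put $\lambda:=f(p)>0$, so $p\notin C_f(\lambda)$. Positive convexity supplies $q_0\in\real^n_+$ and $\gamma>0$ with $\inner{p}{q_0}\geq\gamma$ and $\inner{r}{q_0}<\gamma$ for all $r\in C_f(\lambda)$. Let $\beta:=\inner{p}{q_0}$ and $H:=\set{r\in\real^n_+|\inner{r}{q_0}=\beta}$. Since $p\in H$ we get $\inf_{r\in H}f(r)\leq f(p)=\lambda$; conversely, any $r\in H$ satisfies $\inner{r}{q_0}=\beta\geq\gamma$, hence $r\notin C_f(\lambda)$ and $f(r)\geq\lambda$. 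Therefore $\inf_{r\in H}f(r)=\lambda=f(p)$, and $H$ has exactly the form demanded by Theorem~\ref{12} with $q_0\in\real^n_+$, so $f\in\Delta$.

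The step I expect to be the main obstacle is the passage, in the converse direction, from the threshold $\gamma$ produced by positive convexity to a hyperplane that genuinely passes through $p$ and on which $f$ attains the value $f(p)$ as its infimum: a priori the separation only yields $\inner{p}{q_0}\geq\gamma$, not equality. This is precisely why one works with the \emph{strict} sublevel set $C_f(\lambda)=\set{r\in\real^n_+|f(r)<\lambda}$ rather than $\set{r\in\real^n_+|f(r)\leq\lambda}$: the shifted hyperplane $\inner{\cdot}{q_0}=\beta$ through $p$ still misses $C_f(\lambda)$ even when $\beta>\gamma$, after which the chain ``$r\in H\Rightarrow r\notin C_f(\lambda)\Rightarrow f(r)\geq\lambda$'' pins the infimum from below. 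Keeping the scaling argument and the identity $f(0)=0$ straight is the only nontrivial bookkeeping in the forward direction.
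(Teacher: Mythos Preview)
Your proof is correct and follows essentially the same route as the paper's: in both directions you reduce to the hyperplane criterion of Theorem~\ref{12}, using Proposition~\ref{13} (equivalently, the containment $C_f(\lambda)\subset H^-_{\alpha p_0}$) for the forward direction. You are in fact a bit more careful than the paper in two places: you explicitly verify $\gamma=\inner{\alpha p_0}{q_0}>0$ (required by the definition of positive convexity but not checked in the paper), and in the converse you correctly handle the possibility $\inner{p}{q_0}>\gamma$ by shifting the hyperplane to level $\beta=\inner{p}{q_0}$, whereas the paper simply asserts $\inner{p_0}{q_0}=\gamma$.
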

\begin{proof}"$\Rightarrow$":
 We have proved that $C_f(\lambda)$ is convex. $\forall p_0 \notin C_f(\lambda)$, i.e. $f(p_0) \geq \lambda$,
 let $\alpha >0$ such that $f(\alpha p_0)=\lambda$, so $\alpha \leq 1$. By Theorem \ref{12}, $\exists q\in \real^n_{+}$ such that on the hyperplane $H_{\alpha p_0}=\set{r\in \real^n_{+}\big| \inner{r}{q}=\inner{\alpha p_0}{q}}$ we have $\lambda = f(\alpha p_0)= \inf\limits_{r\in H_{\alpha p_0}}f(r)$ and $C_f(\lambda)\subseteq H_{\alpha p_0}^-$, where $C_f(\lambda)\subseteq H_{\alpha p_0}^-$
 means that $\inner{p}{q}<\inner{\alpha p_0}{ q}$  for every  $p\in C_f(\lambda)$.
 While $\inner{p_0}{q}\geq \inner{\alpha p_0}{q}$, therefore $C_f(\lambda)$ is a positive convex set.

"$\Leftarrow$": $\forall p_0 \in \real^n_{+}$, let $\lambda = f(p_0)$. Because $C_f(\lambda)$ is positive convex, there exists $q_0\in \real^n_{+},~\gamma >0$ such that $\inner{p_0}{q_0}= \gamma$ and $\forall p \in C_f(\lambda)$, $\inner{p}{q_0}<\gamma$. Let $H=\set{r\in \real^n_{+}\big| \inner{r}{q_0}= \gamma}$. So $\forall r \in H,~f(r) \geq \lambda = f(p_0)$, that is $f(p_0)=\min\limits_{r\in H}f(r)$. By Theorem \ref{12}, $f\in \Delta$.
\end{proof}

\begin{prop} \label{lem:mon}
If  $f\in \mathcal{O}$, then f is  monotonically increasing and convex if and only if $C_f(\lambda)$ is positive convex for every $ \lambda>0$.
\end{prop}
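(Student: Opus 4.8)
The plan is to route both implications through the sublevel sets $C_f(\lambda)=\set{p\in\real^n_+: f(p)<\lambda}$, using Lemma~\ref{lem: convex} for the convexity bookkeeping and isolating a Hahn--Banach separation for the role of monotonicity. For the forward implication, assume $f$ is monotonically increasing and convex. Convexity of $f$ gives, via Lemma~\ref{lem: convex}, that each $C_f(\lambda)$ is convex. Monotonicity gives the extra structural fact that $C_f(\lambda)$ is \emph{downward closed} in $\real^n_+$: if $x\in C_f(\lambda)$ and $0\leq z\leq x$ coordinatewise, then $x=z+(x-z)$ with $x-z\in\real^n_+$, so $f(z)\leq f(z+(x-z))=f(x)<\lambda$ and $z\in C_f(\lambda)$. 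Note also that $f(0)=0$ (positive affineness forces $f(0)=\lambda f(0)$ for all $\lambda>0$), so $0\in C_f(\lambda)$, and $C_f(\lambda)$ is relatively open in $\real^n_+$ by continuity of $f$.

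The crux is to produce, for a given $p_0\in\real^n_+\setminus C_f(\lambda)$, a separating vector that lies in $\real^n_+$. I would do this by enlarging the set: put $D:=C_f(\lambda)-\real^n_+=\set{x-r: x\in C_f(\lambda),\ r\in\real^n_+}$. This set is convex, and it is open in $\real^n$ — given $x\in C_f(\lambda)$, continuity of $f$ supplies $\delta>0$ with $f(x+w)<\lambda$ for every $w\in\real^n_+$ with $\norm{w}<\delta$, and then for any $v$ with $\norm{v}<\delta$ one writes $v=v^+-v^-$ with $v^\pm\in\real^n_+$ and gets $x+v=(x+v^+)-v^-\in D$. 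Downward closedness yields $D\cap\real^n_+=C_f(\lambda)$, so $p_0\notin D$. Separating the point $p_0$ from the nonempty open convex set $D$ gives a nonzero $q$ and a real $c$ with $\inner{y}{q}<c\leq\inner{p_0}{q}$ for all $y\in D$; applying this to $y=x-tr$ with $x\in C_f(\lambda)$, $r\in\real^n_+$, $t>0$ and letting $t\to\infty$ forces $\inner{r}{q}\geq 0$ for every $r\in\real^n_+$, i.e. $q\in\real^n_+$, while $y=x$ gives $\inner{x}{q}<c$ for all $x\in C_f(\lambda)$, and $0\in C_f(\lambda)$ gives $c>0$. Thus $q$ and $\gamma:=c$ certify that $C_f(\lambda)$ is positive convex.

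For the converse, assume every $C_f(\lambda)$ is positive convex, hence in particular convex, so $f$ is convex by Lemma~\ref{lem: convex}. To get monotonicity, fix $p,q\in\real^n_+$; if $f(p)=0$ then $p=0$ by nondegeneracy and $f(p+q)\geq 0=f(p)$, so assume $\lambda:=f(p)>0$. Then $p\notin C_f(\lambda)$, so positive convexity supplies $q_0\in\real^n_+$ and $\gamma>0$ with $\inner{p}{q_0}\geq\gamma$ and $\inner{x}{q_0}<\gamma$ for all $x\in C_f(\lambda)$. If $f(p+q)<\lambda$ held, then $p+q\in C_f(\lambda)$ and $\inner{p+q}{q_0}<\gamma\leq\inner{p}{q_0}\leq\inner{p}{q_0}+\inner{q}{q_0}=\inner{p+q}{q_0}$, a contradiction; hence $f(p+q)\geq\lambda=f(p)$, which is the asserted monotonicity.

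The only delicate point is the forward direction: turning an arbitrary separating hyperplane into one whose normal lies in $\real^n_+$. Monotonicity is exactly what makes this possible, through the downward closedness of the sublevel sets, and passing to the Minkowski difference $D$ packages this cleanly while simultaneously removing any need for separate treatment of relative-boundary points of $C_f(\lambda)$ in the orthant. The remaining verifications are routine, and together with Theorem~\ref{1.1} this proposition also yields the equivalence in Theorem~\ref{thm:final}.
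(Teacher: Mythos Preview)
Your proof is correct, and the backward implication matches the paper's argument essentially verbatim. The forward implication, however, takes a genuinely different route. The paper first rescales $p_0$ to a boundary point $\beta p_0\in\partial C_f(\lambda)$, separates it from the (relatively) open convex set $C_f(\lambda)$, and then forces the separating normal into $\real^n_+$ by a two-case argument: when $p_0$ lies in the interior of the orthant, monotonicity is used coordinatewise to rule out any $q_i<0$; when $p_0$ has zero coordinates, they perturb into the interior, apply Case~1, and pass to a limit via compactness of the unit sphere. Your Minkowski-difference trick $D=C_f(\lambda)-\real^n_+$ replaces all of this: $D$ is open and convex in the full space $\real^n$, downward closedness (which is exactly where monotonicity enters) guarantees $p_0\notin D$, and the invariance $D-\real^n_+=D$ forces any separating functional to be nonnegative automatically. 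This is cleaner and more conceptual, avoiding both the coordinate case split and the approximation/compactness step; the paper's approach is more hands-on but uses only the most elementary separation (in the orthant) and makes the role of each coordinate of $q$ explicit. One small remark: your openness argument for $D$ is written only at points $x\in C_f(\lambda)$, but since $D-\real^n_+=D$ the $\delta$-ball around any $x-r$ is the translate by $-r$ of the ball around $x$ and hence still lies in $D$, so the claim follows immediately.
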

\begin{proof}"$\Rightarrow$":
We have shown that
 $C_f(\lambda)$ is convex. To prove it is positive convex, let us take any vector $p_0=(\alpha(1),\alpha(2),...,\alpha(n))\in \real^n_{+}$, with $p_0 \notin C_f(\lambda)$.  There exists some
$0<\beta \leq 1$ such that $f(\beta p_0)=\lambda$, so $\beta p_0 \notin C_f(\lambda)$. Since $C_f(\lambda)$ is convex and open, there must be a vector $q=(q(1),...,q(n))$ such that $\inner{\beta p_0}{q}=\gamma$ and  $\inner{p}{q}<\gamma$ for any $ p\in C_f(\lambda)$.

\noindent {\bf Case (1)} Suppose   $\alpha(1)\cdots\alpha(n)>0$.  In this case, if $q\notin \real^n_{+}$, we assume $q_i<0$. Assume the vector $p'=\beta p_0-(0,0,...,0,\beta\alpha(i),0,...,0)$. We have $\inner{p'}{q}>\inner{\beta p_0}{q}=\gamma$. Thus there exists $0<\theta<1$ such that $\inner{\theta p'}{q}=\gamma$. By monotonicity, $f(\theta p')<f(p')\leqslant f(\beta p_0)=\lambda$, which contradicts to the choice of $q$. Thus  $q\in \real^n_{+}$, and $\inner{p_0}{q}\geq\inner{\beta p_0}{q}=\gamma$.

\noindent{\bf Case (2)} If  $\alpha(1)\cdots \alpha(n)=0$,  let $p_k':=(\beta\alpha(1)+\frac{1}{k},...,\beta\alpha(n)+\frac{1}{k})$, and  $p_k=\theta_k p_k'$ such that $f(p_k)=\lambda$ with
 $0<\theta_k\leq1$. Then the vectors $\{p_k\}$ satisfy: 
\[
\text{(a) } p_k' \in \real^n_{\geq 0}\;,
\quad
\text{(b) } f(p_k)=\lambda, \forall k\;, \quad
\text{(c) }\lim\limits_{k\rightarrow\infty}p_k=\beta p_0\;.
\]
 As in the proof of Case (1), for each $p_k$,
 there exists a $q_k\in \real^n_{+}$ such that $\|q_k\|=1$ and $\inner{p}{q_k}<\inner{p_k}{q_k}$ for any   $p\in C_f(\lambda)$. Since the unit ball of a finite dimensional space is compact, we can choose a subsequence $\{q_{k_t}\}$ that converges to a vector $Q$, i.e.,   $Q\in \real^n_{+}$, and $\inner{\beta p_0}{Q}= \lim\limits_{t\rightarrow \infty}\inner{p_{k_t}}{q_{k_t}}$. Therefore,  $\inner{p}{Q}=\lim\limits_{t\rightarrow \infty}\inner{p}{q_{k_t}}\leq \inner{\beta p_0}{Q}$ for any $p\in C_f(\lambda)$. Since $C_f(\lambda)$ is open, then
 $\inner{p}{Q}<\inner{\beta p_0}{Q}\leq \inner{p_0}{Q}$, which means $C_f(\lambda)$
 is positive convex.

"$\Leftarrow$": According to Lemma \ref{lem: convex}, f is a convex function.
  For any $ p_0,q \in \real^n_{+}$, let $\lambda=f(p_0)$, then $p_0\in C_f(\lambda)$. Since $C_f(\lambda)$ is positive convex,
 there exist $q_0\in \real^n_{+}$ and $\gamma>0$ such that $\inner{p_0}{q_0}\geq \gamma$, and $\inner{p}{q_0}<\gamma$ for any  $ p\in C_f(\lambda)$. Besides, $\inner{p_0+q}{q_0}\geq \inner{p_0}{q_0}\geq \gamma$, hence
 $p_0+q\notin C_f(\lambda)$. Thus we have $f(p_0+q)\geq \lambda=f(p_0)$. Therefore f is monotonically increasing.
\end{proof}

\begin{proof}[Proof of Theorem \ref{thm:final}]
This is a direct consequence of Theorem \ref{1.1} and Proposition \ref{lem:mon}.
\end{proof}

Since   the weighted graph invariants $\alpha_G( p), \Theta_G(p), \vartheta_G(p), \alpha^*_G(p)$ all
belong to $\mathcal{O}$, the above theorem gives us a sufficient and necessary condition for them to be $\mathfrak{B}^2$ invariant.
Moreover, a direct corollary is that  the weighted Shannon capacity $ \Theta_G( p)$ is not $\mathfrak{B}^2$ invariant.
\begin{cor}
There exists a graph $G$ such that
$ \Theta_G$ is not $\mathfrak{B}^2$ invariant.
\end{cor}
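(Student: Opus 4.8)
The plan is to derive the statement from Theorem~\ref{thm:final} together with the known failure of convexity of the weighted Shannon capacity. Recall that $\Theta_G\in\mathcal{O}$ for every graph $G$, so Theorem~\ref{thm:final} applies verbatim: $\mathfrak{B}^2\Theta_G=\Theta_G$ holds if and only if $\Theta_G$ is \emph{both} monotonically increasing and convex. Hence, to produce a graph $G$ with $\mathfrak{B}^2\Theta_G\neq\Theta_G$, it suffices to produce one for which $\Theta_G$ fails to be convex; one need not even examine monotonicity, since the biconditional requires both properties simultaneously and the failure of either already breaks $\mathfrak{B}^2$-invariance.

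To make the non-convexity concrete, I would first record an elementary reduction from the definition $\Theta_G(p)=\lim_n\sqrt[n]{\alpha_{G^{\boxtimes n}}(p^{\otimes n})}$. For a disjoint union $G=G_1\sqcup G_2$ on vertex sets $V_1\sqcup V_2$, weighting a vertex by $0$ removes it from every weighted independence count, and the subgraph of $G^{\boxtimes n}$ induced on $V_i^n$ is exactly $G_i^{\boxtimes n}$; therefore $\Theta_G(\mathbf 1_{V_1})=\Theta_{G_1}$, $\Theta_G(\mathbf 1_{V_2})=\Theta_{G_2}$, while $\Theta_G(\mathbf 1_{V_1}+\mathbf 1_{V_2})=\Theta_G$ (the unweighted capacity of $G_1\sqcup G_2$). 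Consequently, convexity of $\Theta_G$ at the pair $(\mathbf 1_{V_1},\mathbf 1_{V_2})$ would force the additivity inequality $\Theta_{G_1\sqcup G_2}\le\Theta_{G_1}+\Theta_{G_2}$. By Alon's construction of graphs whose Shannon capacity is strictly superadditive under disjoint union, i.e.\ $\Theta_{G_1\sqcup G_2}>\Theta_{G_1}+\Theta_{G_2}$, the choice $G=G_1\sqcup G_2$ gives a non-convex $\Theta_G$; alternatively, one may simply invoke the non-convexity of $\Theta_G$ already established in~\cite{Acin15}. In either case, Theorem~\ref{thm:final} immediately yields $\mathfrak{B}^2\Theta_G\neq\Theta_G$ for this $G$.

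The only genuine content is the non-convexity of $\Theta_G$ for some $G$, and the main obstacle is exactly the point where Shannon capacity is hard: the strict superadditivity $\Theta_{G_1\sqcup G_2}>\Theta_{G_1}+\Theta_{G_2}$. I would not attempt to reprove this, but cite it (equivalently, cite the non-convexity statement of~\cite{Acin15}) as a black box, since all that the corollary needs is the bare existence of one witnessing graph. Everything else — verifying $\Theta_G\in\mathcal{O}$, the disjoint-union identities above, and the final appeal to Theorem~\ref{thm:final} — is routine.
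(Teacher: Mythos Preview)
Your proposal is correct and follows essentially the same route as the paper: cite the non-convexity of $\Theta_G$ for some graph (from \cite{Acin15}, which in turn rests on Alon's superadditivity example you sketch) and apply Theorem~\ref{thm:final}. The paper's proof is just the two-line version of what you wrote, without unpacking the disjoint-union reduction.
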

\begin{proof}
Ac\'in et al have proved~\cite{Acin15} that there exists a graph such that $ \Theta_G$  is not convex.  Hence we infer from Theorem \ref{thm:final} that such a graph provides an example.
\end{proof}

Since the weighted Shannon capacity may not be $\mathfrak{B}^2$ invariant, then we can define the dual  (weigted) Shannon capacity
as
\begin{eqnarray}
\hat{\Theta}_G(p):=(\mathfrak{B}\Theta_{\overline{G}})(p)\;,\;
\hat{\Theta}_{G}:=\hat{\Theta}_G(\textbf{1})\;,
\end{eqnarray}
where the weighted function $\textbf{1}$ assigns 1 to each vertex.
Moreover, we can define the double dual  (weighted) Shannon capacity
as

\begin{eqnarray}
\hat{\hat{\Theta}}_G(p):=(\mathfrak{B}\hat{\Theta}_{\overline{G}})(p)\;,\;
\hat{\hat{\Theta}}_{G}:=\hat{\hat{\Theta}}_G(\textbf{1})\;.
\end{eqnarray}
It is easy to find the relationship between $\alpha_G(p)$,
$\vartheta_G(p)$, $\Theta_G( p)$, $\hat{\Theta}_G( p)$, $\hat{\hat{\Theta}}_G( p)$ and $\alpha^*_G(p)$:

\begin{prop}
If $G=(V(G), E(G))$ is a graph and $p:V(G)\rightarrow [0, +\infty)$ is a weight function on $G$, then
\begin{eqnarray}
\alpha_G(p)\leq  \hat{\hat{\Theta}}_G( p) \leq \Theta_G( p)\leq \vartheta_G(p)\leq \hat{\Theta}_G(p) \leq \alpha^*_G(p)\;.
\end{eqnarray}
\end{prop}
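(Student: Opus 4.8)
The plan is to reduce everything to two classical facts about the chain for individual graphs and then propagate them through $\mathfrak{B}$ using only the formal properties established above: that $\mathfrak{B}$ reverses the order $\leq$ and satisfies $\mathfrak{B}^2 f\leq f$ (Proposition~\ref{prop:ddual}), the duality identities $\vartheta_G=\mathfrak{B}\vartheta_{\overline G}$ and $\alpha^*_G=\mathfrak{B}\alpha_{\overline G}$ (Theorem~\ref{thm:g_dual}), and the fixed-point relations $\alpha_G,\vartheta_G,\alpha^*_G\in\Delta$ (Corollary~\ref{cor:dual}). I will use that $\overline{\overline G}=G$, so that unwinding the definitions gives $\hat{\Theta}_G=\mathfrak{B}\Theta_{\overline G}$ and $\hat{\hat{\Theta}}_G=\mathfrak{B}\hat{\Theta}_{\overline G}=\mathfrak{B}^2\Theta_G$ as elements of $\mathcal{O}$, and that $\mathfrak{B}^2$, being the composition of two order-reversing maps, is order-preserving.

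Step 1 (classical input). I would first record that $\alpha_H(p)\leq\Theta_H(p)\leq\vartheta_H(p)$ for every graph $H$ and weight $p$. For the left inequality, $n\mapsto\alpha_{H^{\boxtimes n}}(p^{\otimes n})$ is supermultiplicative, since the coordinatewise product of independent sets in the factors is an independent set of the strong product; hence by Fekete's lemma the defining limit equals $\sup_n\sqrt[n]{\alpha_{H^{\boxtimes n}}(p^{\otimes n})}$, which dominates the $n=1$ term $\alpha_H(p)$. For the right inequality, the weighted Lov\'asz number is multiplicative under the strong product, $\vartheta_{H^{\boxtimes n}}(p^{\otimes n})=\vartheta_H(p)^n$, and dominates the weighted independence number, so $\sqrt[n]{\alpha_{H^{\boxtimes n}}(p^{\otimes n})}\leq\vartheta_H(p)$ for all $n$, and letting $n\to\infty$ gives $\Theta_H(p)\leq\vartheta_H(p)$. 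In particular $\alpha_H\leq\Theta_H\leq\vartheta_H$, hence $\alpha_H\leq\vartheta_H$.

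Step 2 (propagation through $\mathfrak{B}$). Apply $\mathfrak{B}$ to the chain $\alpha_{\overline G}\leq\Theta_{\overline G}\leq\vartheta_{\overline G}$ (all three lie in $\mathcal{O}$). Order-reversal yields $\mathfrak{B}\vartheta_{\overline G}\leq\mathfrak{B}\Theta_{\overline G}\leq\mathfrak{B}\alpha_{\overline G}$, which by Theorem~\ref{thm:g_dual} and the definition of $\hat{\Theta}_G$ reads $\vartheta_G\leq\hat{\Theta}_G\leq\alpha^*_G$ (this also recovers $\vartheta_G\leq\alpha^*_G$ for free). For the two lowest links, apply the order-preserving map $\mathfrak{B}^2$ to $\alpha_G\leq\Theta_G$: since $\mathfrak{B}^2\alpha_G=\alpha_G$ (as $\alpha_G\in\Delta$) and $\mathfrak{B}^2\Theta_G=\hat{\hat{\Theta}}_G$, this gives $\alpha_G\leq\hat{\hat{\Theta}}_G$; and $\hat{\hat{\Theta}}_G=\mathfrak{B}^2\Theta_G\leq\Theta_G$ is precisely $\mathfrak{B}^2 f\leq f$ of Proposition~\ref{prop:ddual} for $f=\Theta_G\in\mathcal{O}$. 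Concatenating,
\[
\alpha_G\leq\hat{\hat{\Theta}}_G\leq\Theta_G\leq\vartheta_G\leq\hat{\Theta}_G\leq\alpha^*_G,
\]
and evaluating at $p$ gives the claim.

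Main obstacle. Essentially the whole argument is formal manipulation of $\mathfrak{B}$; the only genuine content is Step 1, where one must justify the super/multiplicativity of the \emph{weighted} invariants under the strong product — in particular $\vartheta_{H^{\boxtimes n}}(p^{\otimes n})=\vartheta_H(p)^n$ — and one should also confirm that $\Theta_G\in\mathcal{O}$ so that Proposition~\ref{prop:ddual} applies. Both are already part of the paper's standing assumptions and the cited literature, so no new difficulty is expected.
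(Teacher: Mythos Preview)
Your proposal is correct and follows essentially the same route as the paper: reduce to the classical chain $\alpha_H\leq\Theta_H\leq\vartheta_H$ (which the paper simply cites from \cite{Knuth1994,Acin15}), then push it through $\mathfrak{B}$ using order-reversal and the duality identities to get $\vartheta_G\leq\hat\Theta_G\leq\alpha^*_G$, and use $\mathfrak{B}^2f\leq f$ for $\hat{\hat\Theta}_G\leq\Theta_G$. Your derivation of $\alpha_G\leq\hat{\hat\Theta}_G$ via $\mathfrak{B}^2$ applied to $\alpha_G\leq\Theta_G$ and $\alpha_G\in\Delta$ is a slightly more compact packaging of the paper's two-step argument (which applies \eqref{eq:rel1} to $\overline G$ and then applies $\mathfrak{B}$ once more), but unwinding either one yields the same chain of inequalities.
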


\begin{proof}
It has been proved in \cite{Knuth1994, Acin15} that
\begin{eqnarray}
\alpha_G(p)\leq  \Theta_G(p)\leq \vartheta_G(p)\leq  \alpha^*_G(p)\;.
\end{eqnarray}
Hence, we only need to prove that

\begin{eqnarray}\label{eq:rel1}
 \vartheta_G(p)\leq \hat{\Theta}_G( p) \leq \alpha^*_G(p)\;;
\end{eqnarray}
and
\begin{eqnarray}\label{eq:rel2}
\alpha_G(p)\leq  \hat{\hat{\Theta}}_G( p) \leq \Theta_G(p)\;.
\end{eqnarray}

It is easy to see that Eq. \eqref{eq:rel1} comes from the fact that
$\alpha_{\overline{G}}( w)\leq  \Theta_{\overline{G}}(w)\leq \vartheta_{\overline{G}}(w)$, Proposition \ref{prop:ddual} and
Theorem \ref{thm:g_dual}.
Besides, the first inequality in \eqref{eq:rel2} comes directly from \eqref{eq:rel1}
and Proposition \ref{prop:ddual}, and the second inequality comes  directly from Proposition \ref{prop:ddual}.
\end{proof}

Thus, $\mathfrak{B}$ also provides a way to construct new graph invariants from the old ones.

\begin{prop}If $f,g \in \Delta$, then $tf+(1-t)g\in\Delta $.
That is, $\Delta$ is a convex set.
\end{prop}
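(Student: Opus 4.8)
The plan is to reduce everything to the characterization of $\Delta$ already in hand: by Theorem~\ref{thm:final}, a function in $\mathcal{O}$ lies in $\Delta$ if and only if it is monotonically increasing and convex. Since both monotonicity and convexity are preserved under convex combinations, the statement will follow once I confirm that $h := tf+(1-t)g$ still belongs to $\mathcal{O}$. I would treat $t\in(0,1)$ as the main case; the endpoints $t=0$ and $t=1$ give $h=g$ and $h=f$, which lie in $\Delta$ by hypothesis.

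First I would verify the four defining properties of $\mathcal{O}$ for $h$. Positive affinity is immediate, since $h(\lambda p)=tf(\lambda p)+(1-t)g(\lambda p)=\lambda h(p)$ for $\lambda>0$. Nondegeneracy holds because for $p\neq 0$ both $f(p)>0$ and $g(p)>0$ while $t,1-t>0$. Boundedness follows by taking the obvious combinations of constants: if $c_1\norm{p}\leq f(p)\leq c_2\norm{p}$ and $c_1'\norm{p}\leq g(p)\leq c_2'\norm{p}$, then $\min(c_1,c_1')\norm{p}\leq h(p)\leq\max(c_2,c_2')\norm{p}$. Continuity is clear, a finite linear combination of continuous functions being continuous. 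Hence $h\in\mathcal{O}$.

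Next, since $f,g\in\Delta$, Theorem~\ref{thm:final} tells us $f$ and $g$ are each monotonically increasing and convex. For monotonicity, given $p,q\in\real^n_+$ we have $f(p)\leq f(p+q)$ and $g(p)\leq g(p+q)$, so $h(p)=tf(p)+(1-t)g(p)\leq tf(p+q)+(1-t)g(p+q)=h(p+q)$. For convexity, given $p_1,p_2\in\real^n_+$ and $\mu\in[0,1]$, adding the $t$-weighted and $(1-t)$-weighted convexity inequalities for $f$ and $g$ yields $h(\mu p_1+(1-\mu)p_2)\leq\mu h(p_1)+(1-\mu)h(p_2)$. Thus $h$ is monotonically increasing and convex, and applying Theorem~\ref{thm:final} in the reverse direction to $h\in\mathcal{O}$ gives $\mathfrak{B}^2h=h$, i.e.\ $h\in\Delta$.

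There is no genuine obstacle here; the only mild care needed is to dispatch the endpoints $t\in\{0,1\}$ separately and to keep track of the constants in the boundedness check. One could alternatively argue via Theorem~\ref{1.1} by showing $C_h(\lambda)$ is positive convex, but a sublevel set of $h$ is not simply expressible through those of $f$ and $g$, so routing through the monotone-and-convex characterization provided by Theorem~\ref{thm:final} (equivalently Proposition~\ref{lem:mon} together with Theorem~\ref{1.1}) is both shorter and cleaner.
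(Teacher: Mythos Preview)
Your proposal is correct and follows essentially the same route as the paper's own proof: verify $tf+(1-t)g\in\mathcal{O}$, check that the combination inherits monotonicity and convexity from $f$ and $g$, and invoke Theorem~\ref{thm:final}. The paper's version merely asserts the $\mathcal{O}$-membership and monotonicity as ``obvious'' and writes out the convexity inequality; your additional bookkeeping (constants for boundedness, the $t\in\{0,1\}$ endpoints) fills in details the paper leaves implicit but does not change the argument.
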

\begin{proof}Obviously $tf+(1-t)g \in \mathcal{O}$, and it is monotonically increasing. However,
\begin{eqnarray*} (tf+(1-t)g)(\mu p+(1-\mu)q)
               \leq \mu(tf+(1-t)g)(p)+(1-\mu)(tf+(1-t)g)(q)\;,
\end{eqnarray*}
so $tf+(1-t)g$ is a convex function.
\end{proof}

\begin{prop} If $f=\mathfrak{B}f$, then $f(p)=\sqrt{\inner{p}{p}}$ for every $p\in \real^n_{+}$ .
\end{prop}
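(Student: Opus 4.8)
The plan is to establish the two inequalities $f(p)\geq\sqrt{\inner{p}{p}}$ and $f(p)\leq\sqrt{\inner{p}{p}}$ separately, using only the fixed-point identity $f(p)=\sup_{q\neq 0}\frac{\inner{p}{q}}{f(q)}$ together with the nondegeneracy of $f$ and the Cauchy--Schwarz inequality. The case $p=0$ is immediate: continuity and positive affinity force $f(0)=0=\sqrt{\inner{0}{0}}$, so from now on fix $p\neq 0$.

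For the lower bound I would specialize the supremum defining $\mathfrak{B}f=f$ to the single vector $q=p$. Since $f$ is nondegenerate, $f(p)>0$, and the fixed-point identity gives $f(p)\geq\frac{\inner{p}{p}}{f(p)}$, hence $f(p)^2\geq\inner{p}{p}$, that is $f(p)\geq\sqrt{\inner{p}{p}}=\norm{p}$. This bound holds for every vector, so in particular $f(q)\geq\norm{q}$ for all $q\in\real^n_+$ with $q\neq0$.

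For the upper bound I would feed this estimate back into the supremum. Because $\inner{p}{q}\geq 0$ for $p,q\in\real^n_+$ and $f(q)\geq\norm{q}$, every term obeys $\frac{\inner{p}{q}}{f(q)}\leq\frac{\inner{p}{q}}{\norm{q}}\leq\norm{p}$, the last step by Cauchy--Schwarz. Taking the supremum over $q\neq0$ gives $f(p)=(\mathfrak{B}f)(p)\leq\norm{p}=\sqrt{\inner{p}{p}}$. Combining the two bounds yields $f(p)=\sqrt{\inner{p}{p}}$, as claimed.

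I do not expect a genuine obstacle. The only points deserving a word of care are the degenerate case $p=0$, handled by continuity, and the nonnegativity $\inner{p}{q}\geq 0$ on $\real^n_+$, which is exactly what lets the Cauchy--Schwarz estimate survive inside the supremum. A heavier alternative would route through $\Delta$: a fixed point of $\mathfrak{B}$ certainly lies in $\Delta$ and is self-dual, so Theorem~\ref{1.1} makes its sublevel sets positive convex, and one could attempt to identify their shape; but the direct two-inequality argument is shorter and entirely self-contained.
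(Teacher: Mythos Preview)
Your proof is correct and follows essentially the same two-step argument as the paper: specialize $q=p$ to get $f(p)\geq\sqrt{\inner{p}{p}}$, then feed this bound back into the supremum and apply Cauchy--Schwarz to obtain $f(p)\leq\sqrt{\inner{p}{p}}$. Your treatment is slightly more explicit about the $p=0$ case and the role of nondegeneracy, but the substance is identical.
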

\begin{proof}$f(p)=(\mathfrak{B}f)(p)=\sup\limits_{q\neq 0}\frac{\inner{p}{q}}{f(q)}\geq \frac{\inner{p}{p}}{f(p)}$ , so  $f(p)\geq\sqrt{\inner{p}{p}}$ . So\\ $(\mathfrak{B}f)(p)=\sup\limits_{q\neq0}\frac{\inner{p}{q}}{f(q)}\leq \sup\limits_{q\neq0}\frac{\inner{p}{q}}{\sqrt{\inner{q}{q}}}\leq \sqrt{\inner{p}{p}}$ . Therefore $f(p)=\sqrt{\inner{p}{p}}$ .
\end{proof}

\section{Informational interpretation of  graph invariants}
\label{sect:Interpretation}
Max-relative entropy between two quantum states has been introduced in \cite{Dat09a} and plays an important role in the one-shot
manipulation of entanglement and coherence \cite{Dat09b,BD11,Bu17}. Here, we define the max-relative entropy between two
probabilistic models on a hypergraph.

\begin{Def}
For a hypergraph $H=(V(H), E(H))$ and two probabilistic models $p, q\in \mathcal{G}(H)$, the
max-relative entropy of $p$ to $q$ is
\begin{eqnarray}
D_{\max}(p||q)
:=\min\set{\lambda\geq0: p(v)\leq 2^{\lambda} q(v), \forall v\in V(H)}\;.
\end{eqnarray}

\end{Def}

If we choose the probabilistic model $q$ in a subset
$X\subset \mathcal{G}(H)$, then we can quantify the distance between the given
probabilistic model $p$ and the subset $X$ by max-relative entropy as
\begin{eqnarray}
C_{\max}(X, p)
:=\min_{q\in X}D_{\max}(p||q)\;.
\end{eqnarray}
Here the subset $X\subset \mathcal{G}(H)$ is chosen to be
$C(H), Q_1(H), Q(H)$ or $CE_1(H)$. 
Definitions of these four sets can be found in 
\ref{appen:def_con}.

\begin{thm}\label{thm:space}
Given a contextual scenario $H$ and a probabilistic model $p\in \mathcal{G}(H)$, we have the following relationships:
\begin{eqnarray}
\nonumber C_{\max}(C(H), p)&=&\log(\alpha^*_{NO(H)}( p))\geq C_{\max}(Q(H), p)\geq C_{\max}(Q_1(H), p)\\
\nonumber&=&\log(\vartheta_{NO(H)}( p))\geq C_{\max}(CE_1(H), p))=\log(\alpha_{NO(H)}( p))\;.~~~
\end{eqnarray}

\end{thm}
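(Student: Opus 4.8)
~~The final statement to prove is Theorem~\ref{thm:space}, which identifies three of the max-relative entropy quantities $C_{\max}(X,p)$ (for $X = C(H), Q_1(H), CE_1(H)$) with logarithms of weighted graph invariants of the no-disturbance graph $NO(H)$, together with the monotonicity chain coming from the inclusions of the convex sets.

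\medskip

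The plan is to unwind the definition $C_{\max}(X,p) = \min_{q\in X} D_{\max}(p\|q) = \min_{q\in X}\min\{\lambda\geq 0 : p(v)\leq 2^\lambda q(v)\ \forall v\}$, and observe that this equals $\log \min\{\mu \geq 1 : p \leq \mu q \text{ for some } q \in X\}$, i.e. $\log(1/t^*)$ where $t^*$ is the largest scalar so that $t\, p$ lies in the downward-closure of $X$. So the first step is a general lemma: for any of the relevant sets $X$ (each of which is convex, contains the point giving the normalization, and is ``downward monotone'' in the appropriate sense within $\mathcal{G}(H)$), one has $2^{C_{\max}(X,p)} = \max_q \langle p, q\rangle$ where $q$ ranges over a suitable polar/packing-type body attached to $X$. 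Concretely, I would show $2^{C_{\max}(C(H),p)}$ is exactly the linear program defining $\alpha^*_{NO(H)}(p)$ — the constraint that $q$ be a (sub-normalized) classical model on $H$ translates, via the correspondence in \cite{Acin15}, into $q$ being a fractional vertex packing of $NO(H)$, because classical models are convex combinations of deterministic ones and the deterministic models correspond to independent sets of $NO(H)$ (equivalently cliques of its complement). This gives the first equality $C_{\max}(C(H),p) = \log \alpha^*_{NO(H)}(p)$.

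\medskip

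Next, I would do the same bookkeeping for the quantum-type sets. For $Q_1(H)$, the Lov\'asz-theta characterization of $\vartheta$ via orthonormal representations (already recalled in \S\ref{sec:pre}, and in the min-max form $\vartheta_G(p) = \min_{\ket\Psi,\psi}\max_v p_v/|\langle\Psi|\psi_v\rangle|^2$ used in \S\ref{sect:DualityProof}) matches exactly the defining conditions of the set $Q_1(H)$ from \ref{appen:def_con}; so $C_{\max}(Q_1(H),p) = \log\vartheta_{NO(H)}(p)$ follows by the same ``$\min$ of $D_{\max}$ = $\log$ of the optimal scaling'' argument. Similarly $CE_1(H)$ should be set up precisely so that its downward cone is generated by the deterministic models, giving $C_{\max}(CE_1(H),p) = \log\alpha_{NO(H)}(p)$. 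The remaining inequalities $C_{\max}(C(H),p)\geq C_{\max}(Q(H),p)\geq C_{\max}(Q_1(H),p)$ and $C_{\max}(Q_1(H),p)\geq C_{\max}(CE_1(H),p)$ are then immediate from the set inclusions $C(H)\subseteq Q(H)\subseteq Q_1(H)$ and $Q_1(H)\supseteq CE_1(H)$ (shrinking the feasible set over which one minimizes $D_{\max}$ can only increase the minimum), together with monotonicity of $D_{\max}(p\|\cdot)$.

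\medskip

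The main obstacle is the precise dictionary between probabilistic models on the hypergraph $H$ and weighted invariants of the graph $NO(H)$: one must verify that scaling $p$ down until it enters $C(H)$ (resp. $Q_1(H)$, $CE_1(H)$) is literally the same optimization as the LP/SDP defining $\alpha^*$ (resp. $\vartheta$, $\alpha$), including that the relevant sets are closed (so the minima are attained) and that normalization constants match so no spurious factor appears in the logarithm. This is exactly the content established in~\cite{Acin15} for the unweighted/normalized case, so the real work is checking it survives verbatim with a general weight $p$ and with sub-normalized models; once that correspondence is in place, the theorem is a short computation plus the inclusion-monotonicity of $C_{\max}$.
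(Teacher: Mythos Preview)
Your approach is essentially the paper's: the inequalities come from the chain of inclusions among the model sets, and the key equality $C_{\max}(C(H),p)=\log\alpha^*_{NO(H)}(p)$ is obtained by a duality argument. The paper makes this explicit by writing $2^{C_{\max}(C(H),p)}$ as the LP $\min\{\sum_i q_i : Mq\geq p,\ q\geq 0\}$ (with $M$ the incidence matrix of maximal independent sets of $H$) and invoking strong LP duality to get $\max\{\langle p,w\rangle : M^T w\leq 1,\ w\geq 0\}$, then identifying that dual with $\alpha^*_{NO(H)}(p)$ via the independent-sets-of-$H$ $\leftrightarrow$ cliques-of-$NO(H)$ correspondence. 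Your ``polar/packing body'' language is the same argument said more abstractly; the paper likewise handles the $Q_1$ and $CE_1$ equalities by declaring them ``similar.''

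Two slips to fix. First, the inclusion you need is $Q_1(H)\subseteq CE_1(H)$, not $\supseteq$; your parenthetical logic (``shrinking the feasible set can only increase the minimum'') is correct, but the containment you wrote is the wrong way round. Second, your description of the $CE_1$ case is off: $CE_1(H)$ is \emph{not} the set whose downward cone is generated by deterministic models---that is $C(H)$. Rather, $CE_1(H)$ is cut out by the inequalities $\sum_{v\in I}q(v)\leq 1$ over independent sets $I$ of $NO(H)$, and this is precisely what forces $\mu\geq \alpha_{NO(H)}(p)$ whenever $p\leq \mu q$; the matching upper bound needs a construction of a $q\in CE_1(H)$ dominating $p/\alpha_{NO(H)}(p)$, which is the step to spell out (and is the part the paper sweeps under ``similar'').
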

\begin{proof}
The inequalities come from the relationships among the various sets 
$C(H)$, $Q_1(H)$, $Q(H)$, $CE_1(H)$ \cite{Acin15}, namely
\begin{eqnarray*}
C(H)\subset Q(H)\subset  Q_1(H)  \subset CE_1(H)\;.
\end{eqnarray*}
Thus we only need to prove the equalities. These proofs are similar, so we only prove
\begin{eqnarray*}
C_{\max}(C(H), p)=\log(\alpha^*_{NO(H)}(p))\;.
\end{eqnarray*}

Suppose that there are $K$ maximal independent sets
$\set{I_i}$ of $H $, where $K\leq 2^{V(H)}$. Let us define a $K\times |V(H)|$
matrix as:
$M_{iv}=1$ if the vertex $v\in I_i$,
and $M_{iv}=0$ otherwise.

Due to the definition of $C_{\max}(C(H), p)$, $2^{C_{\max}(C(H), p)}$ can be rewritten as the following linear  program (LP):
\begin{eqnarray}\label{LP:1}
\min \sum_i q_i,\quad\text{such that }
~Mq\geq p\;, \quad\text{and } 
q\geq 0\;.
\end{eqnarray}
 The dual LP can be written as:
 \begin{eqnarray}\label{LP:2}
\max \sum_v p(v)w(v)\;,\quad\text{such that }
~M^Tw\leq 1\;,\quad\text{and }
w\geq 0\;.
\end{eqnarray}
 It is easy to verify that
 the strongly feasible condition holds which implies that
 \eqref{LP:1} is equal to \eqref{LP:2} \cite{DT2003}. That is, $2^{C_{\max}(C(H), p)}$
 can be expressed as \eqref{LP:2}.

 Due to the one-to-one correspondence between the independent sets in $H$ and
 cliques in $NO(H)$, the condition $M^Tw\leq 1$ means that
 $\sum\limits_{v\in C}w(v)\leq 1$ for any clique $C$ in $NO(H)$.
According to the definition of
$\alpha^*_{NO(H)}(p)$,
\begin{eqnarray*}
\alpha^*_{NO(H)}(p)
=\max_{q}\sum_vp(v)q(v)\;,
\end{eqnarray*}
where the weight function $q: V(H)\to \real_+$ satisfies that
$\sum\limits_{v\in C}q(v)\leq 1$ for any clique $C$ in $NO(H)$.
Thus, we have
$2^{C_{\max}(C(H), p)}=\alpha^*_{NO(H)}(p)$ .
\end{proof}

Any nontrivial inequality for a contextual scenario
$(V(H), E(H)) $ is represented by a real weight function
$w: V(H)\rightarrow \real$  with at least one non-negative component.
Given a probabilistic model $p$ on the contextual scenario
$H$, the inequality
\begin{eqnarray}
\langle p, w \rangle
\leq L(H, w)
\end{eqnarray}
is called a generalized Bell inequality if it is satisfied by all classical
models $p_{cl}\in C(H)$.
(See \cite{Brunner2014} for the details of a Bell inequality.)
Let us take $L(H, w)=\max\limits_{p_{cl}\in C(H)} \langle p_{cl}, w \rangle$
where the maximization is taken over all classical models.
Then the violation of the generalized  Bell inequality $w$ by the probabilistic model
$p$
is quantified by
\begin{eqnarray}
 \frac{\inner{p}{w}}{L(H, w)}\;.
\end{eqnarray}
\begin{thm}[\bf Physical interpretation of duality]\label{thm:phy}
Given a contextual scenario $H$ and a probabilistic model $p$,
the $\alpha^*_{NO(H)}(p)$ provides a lower bound on  the maximum violation of all positive
Bell-type inequalities of the probabilistic model, namely,
\begin{eqnarray}
\alpha^*_{NO(H)}(p)
\leq \max_{w\geq 0} \frac{\inner{p}{w}}{L(H, w)}\;.
\end{eqnarray}

\end{thm}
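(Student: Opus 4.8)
The plan is to exhibit a single explicit weight function $w$ which is nonnegative, for which the ratio $\inner{p}{w}/L(H,w)$ already equals $\alpha^*_{NO(H)}(p)$; taking the supremum over all nonnegative $w$ then yields the stated inequality. The natural candidate is the optimizing weight $w=q_0$ for the fractional packing program defining $\alpha^*_{NO(H)}(p)$, i.e.\ the $q_0:V(H)\to\real_+$ with $\sum_{v\in C}q_0(v)\leq 1$ for every clique $C$ of $NO(H)$ and with $\alpha^*_{NO(H)}(p)=\sum_v p(v)q_0(v)=\inner{p}{q_0}$; existence of $q_0$ follows from compactness of the feasible set, exactly as in the proof of Theorem \ref{thm:g_dual}. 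So it remains to show $L(H,q_0)\leq 1$, where $L(H,w)=\max_{p_{cl}\in C(H)}\inner{p_{cl}}{w}$.

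First I would recall (from \S\ref{sect:Interpretation} and the appendix) that a classical model $p_{cl}\in C(H)$ is a convex combination of deterministic models, and that the deterministic models correspond exactly to the maximal independent sets of $H$: a deterministic model assigns $1$ to the vertices of one independent set $I$ and $0$ elsewhere. Because $\inner{\cdot}{w}$ is linear, the maximum of $\inner{p_{cl}}{q_0}$ over $C(H)$ is attained at a deterministic model, hence $L(H,q_0)=\max_{I\in\mathcal{I}(H)}\sum_{v\in I}q_0(v)$. Now invoke the one-to-one correspondence between independent sets of $H$ and cliques of $NO(H)$ used in the proof of Theorem \ref{thm:space}: each independent set $I$ of $H$ is a clique $C$ of $NO(H)$, so the feasibility constraint on $q_0$ gives $\sum_{v\in I}q_0(v)=\sum_{v\in C}q_0(v)\leq 1$ for every such $I$. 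Taking the maximum over $I$ yields $L(H,q_0)\leq 1$.

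Putting these together,
\begin{eqnarray*}
\alpha^*_{NO(H)}(p)=\inner{p}{q_0}=\frac{\inner{p}{q_0}}{1}\leq\frac{\inner{p}{q_0}}{L(H,q_0)}\leq\max_{w\geq 0}\frac{\inner{p}{w}}{L(H,w)}\;,
\end{eqnarray*}
which is the claim. (One should note $L(H,q_0)>0$ unless $q_0\equiv 0$ on every independent set, which cannot happen for $p\neq 0$ since then $\inner{p}{q_0}>0$; the degenerate case $p=0$ is trivial.)

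The main obstacle, and the only place where care is really needed, is the identification $L(H,q_0)=\max_{I\in\mathcal{I}(H)}\sum_{v\in I}q_0(v)$: this rests on the precise definition of $C(H)$ as the polytope of classical (noncontextual) models and on the fact that its extreme points are the deterministic assignments indexed by independent sets of $H$. I would spell this out by reference to the description of $C(H)$ in Appendix \ref{appen:def_con}, since everything else in the argument is a direct reuse of the compactness and correspondence facts already established in the proofs of Theorems \ref{thm:g_dual} and \ref{thm:space}.
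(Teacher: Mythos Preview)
Your proof is correct and uses the same two combinatorial facts the paper relies on: that the extreme points of $C(H)$ are the deterministic models supported on (maximal) independent sets of $H$, and that independent sets of $H$ coincide with cliques of $NO(H)$. The packaging differs slightly. The paper invokes Theorem~\ref{thm:g_dual} to write $\alpha^*_{NO(H)}(p)=\sup_{w\neq 0}\inner{p}{w}/\alpha_{\overline{NO(H)}}(w)$, then shows $L(H,w)\leq\alpha_H(w)$ for \emph{every} $w\geq 0$ (with $\alpha_H(w)=\alpha_{\overline{NO(H)}}(w)$ via the clique--independent-set correspondence), so the full supremum is dominated termwise. You instead pick the single optimizer $q_0$ of the fractional packing program and verify $L(H,q_0)\leq 1$ directly from its feasibility constraint; this gives a witness $w=q_0$ for the right-hand maximum. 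Your route is marginally more self-contained in that it does not cite the $\mathfrak{B}$-duality, while the paper's route makes the role of the duality theorem explicit; substantively they are the same argument viewed from the primal and dual sides.
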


\begin{proof}
Using Theorem \ref{thm:g_dual}, we only need to show that
$L(H, w)\leq \alpha_H( w)$.
Since each classical model
can be written as a convex combination of
deterministic models,
then
maximization in $L(H, w)$ can be realized by deterministic models,
i.e., $L(H, w)=\max\limits_{p~ \text{deterministic}}\inner{p}{w}$.
It has been proved in \cite{Acin15} that
each  deterministic model $p$ is
characterized
 by the set
\begin{eqnarray}
 V_p
 =\set{v\in V|\ p(v)=1}\;,
 \end{eqnarray}
where $V_p$ intersects each edge in
exactly one vertex. Thus,
$V_p$ is a maximal independent set and
$\inner{p}{w}=\sum\limits_{v\in V_p}w(v)$.
Therefore, 
$L(H, w)\leq \alpha_H(w)$.
\end{proof}

\section{Conclusion and discussion }
In this paper we define a transform $\mathfrak{B}$ on functions, and using $\mathfrak{B}$ we propose a new duality between some graph invariants: these include  the weighted independence number, weighted
Lov\'asz number, and weighted fractional-packing number. We find that they are all $\mathfrak{B}^2$ invariant.
We also find necessary and sufficient conditions for a function to be $\mathfrak{B}^2$ invariant, which only requires it to be convex and monotonically increasing. The transformation 
$\mathfrak{B}$  gives  a new approach to construct  graph invariants.  Moreover, as graph invariants play an important role in the investigation of
contextuality, we provide a new informational interpretation of these weighted graph invariants by max-relative entropy in the contextual scenario. These results shed new insight on  graph theory and quantum information theory.

In addition to the graph invariants we consider in this work, there are also other interesting and useful graph invariants, such as
the chromatic number, which is the minimal number of colors to color the vertices of a graph with the vertices of any edge being different colors.  One could analyze these as well.

It is noteworthy that $\mathfrak{B}$ is very similar to the Legendre-Fenchel transformation \cite{Fenchel49}. Changing the addition in the definition of the Legendre-Fenchel transformation into multiplication then we can get $\mathfrak{B}$. The two transformations have similar properties: as they map to convex functions and are double dual invariant on convex functions. 
However, $\mathfrak{B}$  cannot be obtained simply by taking logarithm of the Legendre-Fenchel transformation; the techniques used in the proof of the Legendre-Fenchel transformation seem not directly applicable  in our proof. This may mean that there is  some general transformation, with these two transformations as special cases.

\section*{Acknowledgment}
 The authors thank Zhengwei Liu for his support, help, and comments. W. Gu thanks Liming Ge and Boqing Xue for discussion on related material.  
This research was supported  by the Templeton Religion Trust under grant TRT 0159.  A.~Jaffe was also supported in part by the ARO  Grant W911NF1910302. K.~Bu is grateful for the support of an Academic Award for Outstanding Doctoral
Candidates from Zhejiang University.

\begin{appendix}
\section{Definitions Relating to Contextuality}\label{appen:def_con}
We repeat the definitions of a number of concepts in Ac\'in et al \cite{Acin15}, from the point of view of this paper.
\begin{Def}[\bf Hypergraph \rm or \bf Contextual senario]
A hypergraph is a pair $H=(V(H),E(H))$, where $V(H)$ is a set, and $E(H)$ is a set of non-empty subsets of $V(H)$. Elements in $V(H)$ are  the vertices of $H$, and elements in $E(H)$ are the hyperedges of $H$.
\end{Def}

\begin{Def}[\bf Non-orthogonality graph]
Given a contextual scenario $H$, the
non-orthogonality graph NO(H) is the graph with the set of vertices $V(NO(H))=V(H)$ and the
adjacency relations $u\sim v$ in $NO(H)$ iff there is no hyperedge $e$ such that $u\in e, v\in e$.
\end{Def}

\begin{Def}[\bf Probabilistic model]
Given a contextual scenario $H$,  a probabilistic model on $H$ is a function $p:V(G)\rightarrow [0, +\infty)$ such that
\begin{eqnarray}
\sum_{v\in e}p(v)=1\;,\;\; \forall e\in E(H)\;.
\end{eqnarray}
The set of all probabilistic models on $H$ is denoted as $\mathcal{G}(H)$.
\end{Def}

\begin{Def}[\bf Classical model]
Given a contextual scenario $H$, a probabilistic model $p\in \mathcal{G}(H)$ is called deterministic if
for any $v\in V(H)$, $p(v)=0$ or $1$.  Moreover, a probabilistic model $p\in \mathcal{G}(H)$ is called classical if it can be
written as a convex combination of some deterministic probabilistic models.  Denote the set of all classical probabilistic models
by $C(H)$.
\end{Def}

\begin{Def}[\bf Quantum model]
Let $H$ be a contextual scenario, then a probabilistic model $p\in \mathcal{G}(H)$  is called a quantum model if there exists
a Hilbert space $\mathcal{H}$, a quantum state $\rho\in D(\mathcal{H}) $ (i.e., $\rho\geq0$ and $\mathrm{Tr}\rho=1$ ) and a projection measurement $\set{P_v}_{v\in e}$ for
each hyperedge $e$ such that
\begin{eqnarray}
\sum_{v\in e}P_v=\mathbb{I}_{\mathcal{H}}\;,\;
p(v)=\mathrm{Tr}(P_v\rho), \forall v\in V(H)\;.
\end{eqnarray}
The set of all quantum models on $H$ is denoted as $Q(H)$.
\end{Def}

\begin{Def}[\bf $Q_1$ model ]
Given a contextual scenario $H$, let $Q_1$ be the set of probabilistic models $p\in \mathcal{G}(H)$ satisfying:
there exists a Hilbert space $\mathcal{H}$, a normalized vector $\ket{\Psi}\in \mathcal{H}$ and
a set of normalized vectors $\set{\ket{\psi_v}}_{v\in V(H)}$ such that
(i) $\ket{\psi_u}\bot\ket{\psi_v}$ for any $u, v\in V(H)$ which are not adjacent in $H$;
(ii) $p(v)=|\langle\psi_v|\Psi\rangle|^2$ for any $v\in V(H)$.
\end{Def}

\begin{Def}[\bf $CE_1$ model]
Given a contextual scenario $H$, we say a probabilistic model $p\in \mathcal{G}(H)$ satisfies the Consistent Exclusivity
if $\sum_{v\in I}p(v)\leq 1$ for any independent set $I\subset V(NO(H))$. The set of such probabilistic models is denoted as
$CE_1(H)$.
\end{Def}

\end{appendix}


\begin{thebibliography}{99}
\bibitem[AB11]{Abramsky2011}
Samson Abramsky and  Adam  Brandenburger,
{The sheaf-theoretic structure of non-locality and contextuality},
\href{http://stacks.iop.org/1367-2630/13/i=11/a=113036}{\emph{ New J. Phys.},
 \textbf{13(11)} (2011), 113036.}


\bibitem[ABGMPS07]{Acin2007}
Antonio Ac\'{\i}n,  Nicolas Brunner, Nicolas  Gisin, Serge Massar,  Stefano Pironio,  and Valerio Scarani,
{Device-Independent Security of Quantum Cryptography against Collective Attacks},
\href{https://link.aps.org/doi/10.1103/PhysRevLett.98.230501}{\emph{Phys. Rev. Lett.},
 \textbf{98(23)} (2007), 230501.}

 \bibitem[ADRSW17]{Acin2017}
 Antonio Ac\'{i}n, Runyao Duan, David E. Roberson, Ana Bel\'{e}n Sainz, Andreas Winter,
{ A new property of the Lov{\'a}sz number and duality relations between graph parameters},
\href{https://doi.org/10.1016/j.dam.2016.04.028},{\emph{Discrete Applied Mathematics},
 \textbf{216(3)} (2017), 489--501.}

\bibitem[AFLS15]{Acin15}
Antonio Ac\'{i}n, Tobias Fritz, Anthony Leverrier and  Ana Bel\'{e}n Sainz,
{A combinatorial approach to nonlocality and contextuality},
\href{https://doi.org/10.1007/s00220-014-2260-1}{\emph{ Commun. Math. Phys.},
 \textbf{334(2)} (2015), 533--628.}

\bibitem[AB09]{Anders2009}
Janet Anders and Dan E. Browne,
{Computational Power of Correlations},
\href{https://link.aps.org/doi/10.1103/PhysRevLett.102.050502}{\emph{Phys. Rev. Lett.},
 \textbf{102(5)} (2009), 050502.}

\bibitem[Ber73]{Berge1973}
C. Berge, Graphs and Hypergraphs, North-Holland (Elsevier), Amsterdam, 1973.

\bibitem[BDBOR17]{Bermejo2017}
Juan Bermejo-Vega, Nicolas Delfosse, Dan E. Browne, Cihan Okay, and Robert Raussendorf,
{Contextuality as a Resource for Models of Quantum Computation with Qubits},
\href{https://link.aps.org/doi/10.1103/PhysRevLett.119.120505}{\emph{Phys. Rev. Lett.},
 \textbf{119(12)} (2017), 120505.}

\bibitem[BD11]{BD11}
Fernando~G.S.L. Brand\~{a}o and Nilanjana Datta,
{One-shot rates for entanglement manipulation under non-entangling maps},
\href{https://doi.org/10.1109/TIT.2011.2104531}{\emph{IEEE Trans. Inf. Theory},
 \textbf{57(3)} (2011), 1754-1760.}

 \bibitem[BCPSW14]{Brunner2014}
Nicolas Brunner, Daniel Cavalcanti, Stefano Pironio, Valerio Scarani, and Stephanie Wehner,
{Bell nonlocality},
\href{https://link.aps.org/doi/10.1103/RevModPhys.86.419}{\emph{Rev. Mod. Phys.},
 \textbf{86(2)} (2014), 419--478.}

\bibitem[BSFPW17]{Bu17}
Kaifeng Bu, Uttam Singh, Shao-Ming Fei, Arun Kumar Pati, and Junde Wu,
{Maximum Relative Entropy of Coherence: An Operational Coherence Measure},
\href{https://link.aps.org/doi/10.1103/PhysRevLett.119.150405}{\emph{Phys. Rev. Lett.},
 \textbf{119(15)} (2017), 150405.}

\bibitem[BCMW10]{Buhrman2010}
Harry  Buhrman, Richard Cleve, Serge Massar, and Ronald de Wolf,
{Nonlocality and communication complexity},
\href{https://link.aps.org/doi/10.1103/RevModPhys.82.665}{\emph{Rev. Mod. Phys.},
 \textbf{82(1)} (2010), 665--698.}

\bibitem[CSA14]{Cabello2014}
Ad\'{a}n Cabello, Simone Severini, and Andreas Winter,
{Graph-Theoretic Approach to Quantum Correlations},
\href{https://link.aps.org/doi/10.1103/PhysRevLett.112.040401}{\emph{Phys. Rev. Lett.},
 \textbf{112(4)} (2014), 040401.}

\bibitem[DT03]{DT2003}
 George B. Dantzig, Mukund N. Thapa, {Linear programming 2:
Theory and Extensions},
 Springer Series in Operations
Research and Financial Engineering (Springer Verlag,
Berlin, 2003).

\bibitem[Dat09a]{Dat09a}
Nilanjana Datta, {Min- and max-relative entropies and a new entanglement monotone},
\href{https://doi.org/10.1109/TIT.2009.2018325}{\emph{IEEE Trans. Inf. Theory},
 \textbf{55(6)} (2009), 2816-2826.}

\bibitem[Dat09b]{Dat09b}
Nilanjana Datta, {Max-relative entropy of entanglement, alia log robustness},
\href{https://doi.org/10.1142/S0219749909005298}{\emph{Int. J. Quant. Inf.},
 \textbf{07(2)} (2009), 475-491.}

\bibitem[DGBR]{Delfosse2015}
Nicolas Delfosse, Philippe Allard Guerin, Jacob Bian, and Robert Raussendorf,
{Wigner Function Negativity and Contextuality in Quantum Computation on Rebits},
\href{https://link.aps.org/doi/10.1103/PhysRevX.5.021003}{\emph{Phys. Rev. X},
 \textbf{5(2)} (2015), 021003.}

 
\bibitem[Fen49]{Fenchel49}
Werner Fenchel,
{On conjugate convex functions},
\href{http://dx.doi.org/10.4153/CJM-1949-007-x}{\emph{Canadian Journal of Mathematics},
 \textbf{1} (1949), 73--77.}

\bibitem[HWVE14]{Howard2014}
Mark Howard, Joel Wallman, Victor Veitch and  Joseph Emerson,
{Contextuality supplies the magic for quantum computation},
\href{http://dx.doi.org/10.1038/nature13460}{\emph{Nature},
 \textbf{510(7505)} (2014), 351-355.}


\bibitem[IK00]{Imrich2000}
 Wilfried Imrich,  Sandi Klav\''{z}ar, Product Graphs: Structure and Recognition. Wiley-Intersciene, New York
(2000)


 \bibitem[Knu94]{Knuth1994}
Donald E. Knuth,
 {The Sandwich Theorem},
 \href{http://www.combinatorics.org/ojs/index.php/eljc/article/view/v1i1a1}{\emph{Electron. J. Comb.},
 \textbf{1(1)} (1994), A1.}

\bibitem[KS67]{Kochen1967}
 Simon B. Kochen and Ernst Specker,
 {The problem of hidden variables in quantum mechanics},
 {\emph{J. Math. Mech.},
 \textbf{17} (1967), 59--87.}

\bibitem[KRS09]{KRS09}
Robert Koenig, Renato Renner, and Christian Schaffner,
{The operational meaning of min- and max-Entropy},
\href{https://doi.org/10.1109/TIT.2009.2025545}{\emph{IEEE Trans. Inf. Theory},
 \textbf{55(9)} (2009), 4337-4347.}

\bibitem[Lov79]{Lovasz1979}
L\'{a}szl\'{o}   Lov\'{a}sz,
 {On the Shannon Capacity of a Graph},
\href{https://doi.org/10.1109/TIT.1979.1055985}{\emph{IEEE Trans. Inf. Theory},
 \textbf{25(1)} (1979), 1--7.}

\bibitem[Rau13]{Raussendorf2013}
Robert Raussendorf,
{Contextuality in measurement-based quantum computation},
\href{https://link.aps.org/doi/10.1103/PhysRevA.88.022322}{\emph{Phys. Rev. A},
 \textbf{88(2)} (2013), 022322.}

\bibitem[Sha56]{Shannon56}
Claude E. Shannon,
{The zero error capacity of a noisy channel},
\href{https://doi.org/10.1109/TIT.1956.1056798}{\emph{IRE Transactions on Information Theory},
 \textbf{2(3)} (1956), 8--19.}

\end{thebibliography}
\end{document}